\documentclass{amsart}
\usepackage[utf8]{inputenc}
\usepackage{amsmath,amsthm}
\usepackage{comment}
\usepackage{amssymb}
\usepackage{stmaryrd}
\usepackage{hyperref}
\usepackage{color}
\usepackage[left=3cm,right=3cm,top=3cm,bottom=4.5cm]{geometry}
\usepackage{pdfsync}
\usepackage{tikz-cd}
\usepackage{url}
\usepackage{enumitem}
\hypersetup{linktoc=all}
\usepackage[capitalise,noabbrev]{cleveref}


\newcommand{\defterm}[1]{\emph{#1}}

\setcounter{tocdepth}{1}

\setcounter{section}{-1}

\numberwithin{equation}{section}

\theoremstyle{plain}
	\newtheorem{thm}[equation]{Theorem}
	\newtheorem*{thm*}{Theorem}
	\newtheorem{cor}[equation]{Corollary}
	\newtheorem*{cor*}{Corollary}
	
	\newtheorem*{prop*}{Proposition}
	\newtheorem{lem}[equation]{Lemma}
	\newtheorem*{lem*}{Lemma}
	
	\newtheorem*{ex*}{Exercise}
	
	\newtheorem*{claim*}{Claim}
	
	\newtheorem*{question*}{Question}
	
	\newtheorem*{fact*}{Fact}
\theoremstyle{definition}
	\newtheorem{Def}[equation]{Definition}
	\newtheorem*{Def*}{Definition}
	\newtheorem{obs}[equation]{Observation}
	\newtheorem*{obs*}{Observation}
	\newtheorem{rmk}[equation]{Remark}
	\newtheorem*{rmk*}{Remark}
	
	\newtheorem{soln*}{Solution}
	
	\newtheorem*{note*}{Note}
	\newtheorem{eg}[equation]{Example}
	\newtheorem*{eg*}{Example}	
	
	\newtheorem*{construction*}{Construction}
	\newtheorem{warning}[equation]{Warning}
	\newtheorem*{warning*}{Warning}
	
	\newtheorem*{conj*}{Conjecture}


\newcommand{\nats}{\mathbb{N}}



\newcommand{\id}{\mathrm{id}}

\newcommand{\Hom}{\mathrm{Hom}}

\newcommand{\Ob}{\operatorname{Ob}}

\newcommand{\const}{\mathrm{const}}

\newcommand{\Psh}{\mathsf{Psh}}


\newcommand{\Cat}{\mathsf{Cat}}



\newcommand{\calC}{\mathcal{C}}
\newcommand{\calD}{\mathcal{D}}


\newcommand{\Spaces}{\mathsf{Spaces}}

\newcommand{\Gaunt}{\mathsf{Gaunt}}
\newcommand{\nat}{\mathrm{nat}}

\newcommand{\sCat}{\mathsf{sCat}}
\newcommand{\lax}{\mathrm{lax}}
\newcommand{\inc}{\mathrm{inc}}

\title{Cubes are dense in $(\infty,\infty)$-categories}
\author{Tim Campion}
\date{September 2022}

\begin{document}

\begin{abstract}
    We show that the strict 1-category $\square$ of cubes -- defined to be the full subcategory of strict $\omega$-categories whose objects are the Gray tensor powers of the arrow category -- are dense in the $(\infty,1)$-category $\mathsf{Cat}_\omega$ of weak $(\infty,\infty)$-categories, in both Rezk-complete and incomplete variants. More precisely, we show that Joyal's category $\Theta$ is contained in the idempotent completion of $\square$, and in fact that the idempotent completion of $\square$ is closed under suspensions and wedge sums. This result extends a theorem of Campbell and Maehara in dimension 2. Following Campbell and Maehara's program, we will in future work apply this result to give a new construction of the Gray tensor product of weak $(\infty,n)$-categories.
\end{abstract}

\maketitle

\tableofcontents

\section{Introduction}
For each $N \in \nats \cup \{\omega\}$, Joyal's category $\Theta_N$ (we write $\Theta$ for $\Theta_\omega$) is a certain full subcategory of the strict 1-category $\Gaunt_N$ of gaunt $N$-categories (cf. \cite{berger}). As such, $\Theta_N$ is a full subcategory of both the 1-category $\sCat_N$ of strict $N$-categories, and of the $(\infty,1)$-category $\Cat_N$ of $(\infty,N)$-categories. Moreover, both inclusion functors $\Theta_N \to \sCat_N$ and $\Theta_N \to \Cat_N$ are \defterm{dense} in the sense that the induced nerve functors $\sCat_N \to \Psh(\Theta_N)$, $\Cat_N \to \Psh(\Theta_N)$ are fully faithful. Thus it is possible to describe each of the $(\infty,1)$-categories $\sCat_N$ and $\Cat_N$ as localizations of the presheaf category $\Psh(\Theta_N)$. In the weak case such a description was first given by Rezk \cite{rezk} (in the strict case, the description is folklore).

There are many possible alternative choices of a full, dense subcategory $A \subseteq \Cat_N$ of potential use for modeling $(\infty,N)$-categories, several of which are studied in \cite{bsp}. Among the possible options, $\Theta_N$ stands out as the ``minimal" choice.\footnote{Indeed, if $A$ is any idempotent-complete, dense full subcategory of $\Cat_N$, then $\Theta_N \subseteq A$ \cite{MO-theta}.} But for some purposes, it is preferable to work with presheaves over a larger site $A$.

For example, when studying the Gray tensor product, it is convenient to work with some dense full subcategory $A \subseteq \Cat_\omega$ which (unlike $\Theta$) is closed under the Gray tensor product. The (lax) Gray tensor product is a biclosed monoidal structure $\otimes^\lax$ on $\sCat_N$ \cite{abs}, whose internal hom classifies $N$-functors and lax natural transformations between them. 
The minimal possible choice would be the category $\square = \{\square^n := [1]^{\otimes^\lax n} \mid n \in \nats\}$ of \defterm{$\omega$-categorical cubes}, 
comprising the the Gray tensor powers $\square^n = [1]^{\otimes^\lax n}$ of the arrow category $\square^1 = [1]$. The first few cubes are pictured below:

\begin{align*}
    \square^0 &= 
    \begin{tikzcd}[ampersand replacement = \&]
        \bullet
    \end{tikzcd}
    \\
    \square^1 &=
    \begin{tikzcd}[ampersand replacement = \&]
        \bullet \ar[r] \& \bullet
    \end{tikzcd}
    \\
    \square^2 &=
    \begin{tikzcd}[ampersand replacement = \&]
        \bullet \ar[r] \& \bullet \\
        \bullet \ar[r] \ar[u,""{name = B, below}] \& \bullet \ar[u, ""{name = T, below}]
        \ar[Rightarrow,from = B, to = T]
    \end{tikzcd}
\end{align*}

Following a program of Campbell and Maehara in dimension 2 \cite{campbell-maehara}, the goal of this note is to prove that the full subcategory inclusion $\square \subset \Cat_\omega$ is in fact dense (\cref{cor:dense} below). In later work \cite{campion-model}, we will describe the resulting localization exhibiting $\Cat_\omega$ as a localization of presheaves on a site slightly larger than $\square$. Following \cite{campbell-maehara}, we will then use this presentation to study the Gray tensor product in the non-strict setting in \cite{campion-gray}.

Following \cite{campbell-maehara}, we reduce the density question to a question in strict $\omega$-category theory as follows. Note that a full subcategory inclusion $\calC \subseteq \calD$ is dense if and only if its idempotent completion $\widetilde \calC \subseteq \calD$ is dense. So to achieve our aim, it will suffice to show that the idempotent completion $\widetilde \square \subseteq \Cat_\omega$ is dense (\cref{cor:dense} below). Moreover if $\calC \subseteq \calD$ is a dense full subcategory and $\calC \subseteq \calC' \subseteq \calD$ is a larger full subcategory, then $\calC' \subseteq \calD$ is dense. So, because $\Theta_N \subseteq \Cat_N$ is dense (\cref{lem:theta-dense} -- this is well known for $N < \omega$), it will suffice to show that $\Theta \subseteq \widetilde \square$ (\cref{cor:dense} below). This is a question which takes place entirely in the setting of the strict 1-category $\Gaunt_\omega \subset \sCat_\omega$ of gaunt $\omega$-categories; no weak $\infty$-category theory is required to address this question. For this reason, there is no further weak $\infty$-category theory in this note. Moreover, in this setting, known facts about the Gray tensor product of strict $\omega$-categories (such as the fact that it is a monoidal biclosed structure) may be brought to bear.

Finally, to prove that $\Theta \subseteq \widetilde \square$, we depart from the approach of \cite{campbell-maehara}.\footnote{Although Campbell and Maehara's proof that $\Theta_2 \subseteq \widetilde \square$ has not appeared in full detail, it has been described as a technical, combinatorics-heavy proof.} In our approach, we use a characterization of $\Theta$ (\cref{lem:theta-ind} below) which roughly says that $\Theta \subseteq \sCat_\omega$ is the smallest full subcategory which contains $\square^0$, is closed under suspension, and is closed under wedge sums. Thus it roughly suffices to show that $\widetilde \square \subset \sCat_\omega$ is closed under suspension and wedge sums. We say ``roughly" because in order to define the wedge sum, a bit of additional structure in the form of a bipointing is required; this is discussed in \cref{sec:theta}. Our main theorems are stated in \cref{sec:cubes}. Among these, those corollaries which pertain to weak $(\infty,N)$-categories are quarantined in \cref{subsec:weak}. The proofs that $\widetilde \square$ is closed under wedge sums (\cref{thm:wedge}) and suspensions (\cref{thm:susp}) occupy \cref{sec:wedge} and \cref{sec:susp} respectively. \cref{sec:app} proves a lemma about cube categories which we could not find in the literature.

\subsection{Notation}\label{subsec:notation}
For $N \in \nats \cup \{\omega\}$, we let $\sCat_N$ denote the strict 1-category of strict $N$-categories. We let $\Theta$ denote Joyal's category $\Theta$, as defined in e.g. \cite{berger}, and let $\Theta_N = \Theta \cap \sCat_N$. We denote by $\otimes^\lax : \sCat_M \times \sCat_N \to \sCat_{M+N}$ the lax Gray tensor product (see \cite{abs} or \cite{ara-malt}). We let $\square^1 \in \sCat_1$ denote the arrow category, and for $n \in \nats$, we let $\square^n = (\square^1)^{\otimes^\lax n}\in \sCat_n$ denote its n-fold power under the Gray tensor product -- the \defterm{$n$-cube}. We write $\partial \square^n$ for the $(n-1)$-skeleton of $\square^n$.

We write $\Spaces$ for the $(\infty,1)$-category of spaces. We use $\Psh(\calC)$ to denote $\Spaces$-valued presheaves on an $(\infty,1)$-category $\calC$. For $N \in \nats \cup \{\omega\}$, we let $\Cat_N^\inc$ denote the localization of $\Psh(\Theta_N)$ at the spine inclusions. 
Then $\Cat_N$ is the localization of $\Cat_N^\inc$ at the Rezk maps \cite{rezk}.
We note in \cref{rmk:defs} below that we have $\Cat_\omega^\inc = \varprojlim_N \Cat_N^\inc$ and $\Cat_\omega = \varprojlim_N \Cat_N$, where the limit is taken in $\Cat$ itself along the forgetful functors $\Cat_{N+1}^{(\inc)} \to \Cat_N^{(\inc)}$, which throw away non-invertible $N+1$-cells. So $\Cat_\omega$ is the $(\infty,1)$-category of $(\infty,\infty)$-categories with \emph{inductive} equivalences, rather than \emph{coinductive} equivalences.

We say that a functor of $(\infty,1)$-categories $F : \calC \to \calD$ is \defterm{dense} if the induced functor $\calD \to \Psh(\calC)$ is fully faithful. This notion has been called \emph{strongly generating} in \cite{htt}, but we have followed recent revisions of \cite{bsp} in reverting to the term \emph{dense}, which mirrors the established terminology in the strict 1-categorical literature.

We denote by $\widetilde{\calC}$ the idempotent-completion of an $(\infty,1)$-category $\calC$. Note that the idempotent completion of a strict 1-category is again a strict 1-category.

We write $\Pr^L$ for the $(\infty,1)$-category of presentable $\infty$-categories in the sense of \cite{htt}, with morphisms the left adjoint functors.

\subsection{Acknowledgements}
I would like to thank Alexander Campbell, Sina Hazratpour, Yuki Maehara, Emily Riehl, Maru Sarazola, and Jonathan Weinberger for helpful conversations. I'm grateful for the support of the ARO under MURI Grant W911NF-20-1-0082.

\section{The inductive structure of $\Theta_N$}\label{sec:theta}
The main goal of this section is to introduce some terminology allowing us to state \cref{lem:theta-ind}, which gives an inductive approach to understanding objects of $\Theta_N$ in terms of two building blocks : wedge sums (\cref{def:wedge}) and suspensions (\cref{def:susp}). The main annoyance is that the wedge sum is only functorial with respect to \emph{bipointed maps}, which necessitates waffling a bit about bipointed objects (\cref{def:bipointed}). Importantly, both wedge sum and suspension continue to make sense in $\sCat_N$ more generally, and in particular we may apply these constructions to cubes. The wedge sum is manifestly defined by a pushout formula which we will use. The suspension may also be defined by a pushout formula (\cref{lem:susp-po}), which we will also use.

\begin{Def}\label{def:bipointed}
Let $\calC$ be a category with a terminal object denoted $\square^0$. Then $\calC_{\ast,\ast}$, the category of \emph{bipointed objects of $\calC$}, is the category whose objects comprise an object $C \in \calC$, along with two morphisms $\square^0 \rightrightarrows \calC$. The morphsims of $\calC_{\ast,\ast}$ preserve the bipointing. 
\end{Def}

\begin{eg}
Let $N \in \nats \cup \{\omega\}$ and $\theta \in \Theta_N$. Then the set of objects of $\theta$ is canonically identified with $\{0,1,\dots,k\}$ for some $k \in \nats$. The \defterm{natural bipointing} on $\theta$ is defined to be the object $\square^0 \rightrightarrows \theta$ of $\Theta_{\ast,\ast}$ which picks out $0$ and $k$. We denote by $\Theta_{N,\ast,\ast}^\nat$ the full subcategory of $\Theta_{N,\ast,\ast}$ on the naturally-bipointed objects. 
\end{eg}

\begin{eg}
Let $\square^n$ be a cube. Then the set of objects of $\square^n$ is canonically identified with the powerset lattice $\{0,1\}^n$. The \defterm{natural bipointing} of $\square^n$ is defined to be the object $\square^0 \rightrightarrows \square^n$ of $\Gaunt_{N, \ast,\ast}$ which picks out the bottom object $\vec 0 = (0,\dots,0)$ and the top object $\vec 1 = (1,\dots,1)$. We denote by $\square^\nat_{\ast,\ast}$ the full subcategory of $\Gaunt_{\ast,\ast}$ on the naturally-bipointed cubes.  
\end{eg}

\begin{rmk}\label{rmk:tensor-bipoint}
Let $N \in \nats \cup \{\omega\}$. The Gray tensor product lifts naturally to a bifunctor $\otimes^\lax : \sCat_{N,\ast,\ast} \times \sCat_{N,\ast,\ast} \to \sCat_{N,\ast,\ast}$, where the bipointing of $A \otimes^\lax B$ is the ``diagonal" bipointing. That is, the bipointings $\partial \square^1 \to A$, $\partial \square^1 \to B$ induce a map $(\partial \square^1) \otimes^\lax (\partial \square^1) \to A \otimes^\lax B$. Precomposing with the diagonal map $\partial \square^1 \to (\partial \square^1) \otimes^\lax (\partial \square^1)$, $(0 \mapsto (0,0),\, 1 \mapsto (1,1))$, we obtain a bipointing of $A \otimes^\lax B$, which is functorial in bipointed maps.
\end{rmk}

\begin{Def}\label{def:wedge}
Let $\calC$ be a category, and let $C, D \in \calC_{\ast,\ast}$ be bipointed objects of $\calC$. The \defterm{wedge sum} of $C$ and $D$, denoted $C \vee D$, is defined (when it exists) to be the pushout $C \vee D = C \cup_{\square^0} D$, where the pushout diagram uses the second point of $C$ and the first point of $D$. This pushout is taken in $\calC$ (and again, may fail to exist in general). The wedge sum $C \vee D \in \calC$ is lifted to $\calC_{\ast,\ast}$ by equipping it with the bipointing whose first point is the first point of $C$, and whose second point is the second point of $D$.

Observe that, insofar as the wedge sum exists, it defines a bifunctor $\vee : \calC_{\ast,\ast} \times \calC_{\ast,\ast} \to \calC_{\ast,\ast}$.
\end{Def}

\begin{eg}\label{eg:wedge-theta}
Let $N \in \nats \cup \{\omega\}$. Wedge sums exist in $\sCat_N$, defining a bifunctor $\vee : \sCat_{N,\ast,\ast} \times \sCat_{N,\ast,\ast} \to \sCat_{N,\ast,\ast}$. This restricts to a bifunctor $\vee : \Theta_{N,\ast,\ast}^\nat \times \Theta_{N,\ast,\ast}^\nat \to \Theta_{N,\ast,\ast}^\nat$.
\end{eg}

\begin{Def}\label{def:susp}
Let $N \in \nats \cup \{\omega\}$, and let $C \in \sCat_N$. The \defterm{suspension} of $C$, denoted $\Sigma C \in \sCat_{N+1}$, has two objects $0,1$, with $\Hom_{\Sigma C}(0,0) = \Hom_{\Sigma C}(1,1) = \square^0$, $\Hom_{\Sigma C}(1,0) = \emptyset$, and $\Hom_{\Sigma C}(0,1) = C$.

$\Sigma C$ is regarded canonically as a bipointed object by taking the two points to be $0$ and $1$. In this way, $\Sigma$ yields a functor $\sCat_N \to \sCat_{N+1,\ast,\ast}$, which restricts to a functor $\Gaunt_N \to \Gaunt_{N+1,\ast,\ast}$ and further restricts to $\Theta_N \to \Theta_{N+1,\ast,\ast}^\nat$.
\end{Def}

\begin{lem}\label{lem:susp-univ}
The suspension $\Sigma C$ has the following universal property. The data of a functor $\Sigma C \to D$ is equivalent to the choice of two objects $d_0,d_1 \in D$ (the images of $0,1$ respectively) and the choice of a functor $C \to D(d_0,d_1)$ (specifying the action of the functor on the nontrivial hom-category). \qed
\end{lem}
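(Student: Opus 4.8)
The plan is to prove \cref{lem:susp-univ} by directly unwinding the definition of a strict functor out of $\Sigma C$, using the explicit description of the hom-$\omega$-categories and composition operations of $\Sigma C$ given in \cref{def:susp}. Recall that a strict functor $F \colon \Sigma C \to D$ consists of a function on objects together with, for each ordered pair of objects $a,b$, an $\omega$-functor $\Hom_{\Sigma C}(a,b) \to \Hom_D(Fa,Fb)$, all compatible with identities and composition.

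First I would describe the forward assignment. Given $F$, put $d_0 = F(0)$ and $d_1 = F(1)$. Since $\square^0$ is terminal in $\sCat_\omega$ and $\Hom_{\Sigma C}(0,0) = \Hom_{\Sigma C}(1,1) = \square^0$, the $\omega$-functors $F$ induces on these hom-objects are forced to pick out $\id_{d_0}$ and $\id_{d_1}$ and so carry no information, and the $\omega$-functor on $\Hom_{\Sigma C}(1,0) = \emptyset$ is unique. The only remaining datum is the $\omega$-functor $\Hom_{\Sigma C}(0,1) = C \to \Hom_D(d_0,d_1) = D(d_0,d_1)$, that is, a functor $C \to D(d_0,d_1)$. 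This produces the assignment $F \mapsto (d_0, d_1, \ C \to D(d_0,d_1))$, which is manifestly injective.

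Conversely, given $d_0, d_1 \in D$ and a functor $g \colon C \to D(d_0,d_1)$, I would define $F$ by the evident recipe: $0 \mapsto d_0$, $1 \mapsto d_1$; the forced functors on the hom-objects $\Hom_{\Sigma C}(0,0)$, $\Hom_{\Sigma C}(1,1)$, $\Hom_{\Sigma C}(1,0)$; and $g$ on $\Hom_{\Sigma C}(0,1) = C$. The substance of the proof is the verification that this $F$ is a well-defined strict functor. Preservation of identities holds by construction. For composition, the point is that every composition operation $\Hom_{\Sigma C}(a,b) \times \Hom_{\Sigma C}(b,c) \to \Hom_{\Sigma C}(a,c)$ of $\Sigma C$ is degenerate: checking the eight triples $(a,b,c) \in \{0,1\}^3$, the operation is the unique map out of the initial $\omega$-category $\emptyset$ whenever $(a,b) = (1,0)$ or $(b,c) = (1,0)$, and otherwise one of the two factors is $\square^0$ and the operation is a unit isomorphism; in either case it is preserved by any identity-preserving functor into a strictly unital $D$, hence by $F$. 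Finally the two assignments are mutually inverse, since on each side the only non-forced datum is precisely the functor $C \to D(d_0,d_1)$.

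I do not expect a real obstacle; the only point requiring a little care is confirming, using the terminality of $\square^0$ and the initiality of $\emptyset$ in $\sCat_\omega$, that no part of the (strict, higher-dimensional) functoriality data other than the object images and the hom-$\omega$-category $C = \Hom_{\Sigma C}(0,1)$ records any information. One could alternatively phrase the result as an adjunction $\Sigma \dashv (D \mapsto D(d_0,d_1))$ between $\sCat_N$ and bipointed strict $(N+1)$-categories, followed by summing over bipointings of $D$, but this reformulation rests on the same unwinding, so I would present the direct argument.
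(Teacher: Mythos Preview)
Your proposal is correct and is exactly the routine unwinding that the paper implicitly relies on; the paper gives no proof at all (the lemma is stated with a bare \qed), so your argument simply spells out what the author regards as immediate from \cref{def:susp}.
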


\begin{lem}\label{lem:susp-po}
Let $N \in \nats \cup \{\omega\}$ and $C \in \sCat_N$. Then there is a canonical identification 
\[\Sigma C = (\square^1 \otimes^\lax C) \cup_{\partial \square^1 \otimes^\lax C} \partial \square^1\]
\end{lem}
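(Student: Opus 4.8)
The plan is to verify the claimed identification by checking that the right-hand side has the universal property of the suspension described in \cref{lem:susp-univ}. First I would unwind the pushout on the right. The inclusion $\partial\square^1 \to \square^1$ is the inclusion of the two endpoints $\{0,1\}$ into the arrow category, so $\partial\square^1 \otimes^\lax C \to \square^1 \otimes^\lax C$ is, using that $\square^0 \otimes^\lax C \cong C$ and that $\otimes^\lax$ preserves colimits in each variable (it is a monoidal biclosed structure on $\sCat_N$, hence each $(-)\otimes^\lax C$ is a left adjoint), the inclusion of the two "end copies" $C \sqcup C \to \square^1 \otimes^\lax C$ sitting over the objects $0$ and $1$. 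Thus the pushout glues $\partial\square^1 = \square^0 \sqcup \square^0$ onto $\square^1 \otimes^\lax C$ along these two copies of $C$, collapsing each end copy of $C$ down to a point.

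Next I would compute the mapping-out universal property of this pushout directly. A functor out of $P := (\square^1 \otimes^\lax C) \cup_{\partial\square^1 \otimes^\lax C} \partial\square^1$ into $D$ is, by the pushout, a functor $F\colon \square^1 \otimes^\lax C \to D$ together with a functor $\partial\square^1 \to D$ — i.e.\ a choice of two objects $d_0, d_1 \in D$ — such that the two composites $C \rightrightarrows \square^1 \otimes^\lax C \xrightarrow{F} D$ agree with the constant functors at $d_0$ and $d_1$ respectively. Now I would use the defining closed structure of the lax Gray tensor product: $\Hom_{\sCat}(\square^1 \otimes^\lax C, D) \cong \Hom_{\sCat}(C, \lax(\square^1, D))$ where $\lax(\square^1,D)$ is the $\omega$-category of functors $\square^1 \to D$ and lax transformations — concretely, an object is a morphism $d_0 \to d_1$ in $D$ together with source/target data, and the evaluation-at-endpoints functors $\lax(\square^1,D) \to D \times D$ correspond under the adjunction to restriction along $\partial\square^1 \to \square^1$. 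Therefore a functor $P \to D$ is the same as a functor $C \to \lax(\square^1, D)$ whose two composites to $D$ are constant at $d_0$ and $d_1$ — equivalently a functor $C \to F_{d_0,d_1}$, where $F_{d_0,d_1}$ is the fiber of $\lax(\square^1,D) \to D\times D$ over $(d_0,d_1)$. The final point is to identify this fiber with the hom-$\omega$-category $D(d_0,d_1)$: an object is precisely a morphism $d_0 \to d_1$, and higher cells are cells of $D$ between such, with no extra lax-transformation data surviving once the endpoints are pinned. This is exactly the data $(d_0, d_1, C \to D(d_0,d_1))$ characterizing $\Sigma C$ in \cref{lem:susp-univ}, so $P \cong \Sigma C$ by Yoneda; naturality in $C$ is routine from naturality of the adjunction and pushout.

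The main obstacle I anticipate is the identification of the fiber $F_{d_0,d_1}$ of $\lax(\square^1,D)\to D\times D$ with $D(d_0,d_1)$ at the level of \emph{all} higher cells, not just objects and $1$-cells. One must check that fixing the two endpoint components of a lax transformation to be identities kills exactly the "transversal" direction and leaves the hom-$\omega$-category untouched; this is where an explicit description of the lax Gray internal hom (as in \cite{ara-malt} or \cite{abs}) is needed, and it is the one genuinely combinatorial check in the argument. An alternative, perhaps cleaner, route that avoids the internal hom is to verify the isomorphism on objects and hom-$\omega$-categories by hand: $P$ visibly has two objects $0,1$ (the images of $\partial\square^1$); one computes $\Hom_P(0,0) = \Hom_P(1,1) = \square^0$ and $\Hom_P(1,0) = \emptyset$ directly from the pushout (the end copies of $C$ having been collapsed), and $\Hom_P(0,1) = \Hom_{\square^1 \otimes^\lax C}((0,-),(1,-))$, which one recognizes as $C$ from the explicit cell description of $\square^1 \otimes^\lax C$ — its cells from the $0$-end to the $1$-end being indexed exactly by the cells of $C$. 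I would present whichever of these two computations is shortest given the conventions fixed earlier in the paper.
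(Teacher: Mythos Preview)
Your proposal is correct in outline, but note that the paper does not actually prove this lemma: it simply cites \cite[Cor.~B.6.6]{ara-malt}. So you are doing strictly more work than the paper does.

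Your Yoneda-style argument via the biclosed structure is a reasonable route, and you have correctly isolated the one nontrivial step: the identification of the fiber of $\lax(\square^1,D) \to D \times D$ over $(d_0,d_1)$ with the hom-$\omega$-category $D(d_0,d_1)$, at the level of all higher cells. This is precisely the content that the cited result in \cite{ara-malt} packages up; your argument effectively unpacks why that corollary holds. The alternative route you sketch (computing the hom-categories of the pushout $P$ by hand and recognizing $\Hom_P(0,1)$ as $C$ from the cell structure of $\square^1 \otimes^\lax C$) is closer in spirit to how Ara--Maltsiniotis actually set things up, and would likely be the shorter option if you wanted to make the argument self-contained. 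Either way, for the purposes of this paper the citation suffices, since the result is used as a black box and nothing downstream depends on the particular method of proof.
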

\begin{proof}
This is \cite[Cor. B.6.6]{ara-malt}.
\end{proof}

\begin{lem}\label{lem:theta-ind}
Let $N \in \nats \cup \{\omega\}$. Then $\Theta_{N,\ast,\ast}^\nat$ is the smallest full subcategory $\calC \subseteq \sCat_{N,\ast,\ast}$ which satisfies the following three closure conditions:
\begin{enumerate}
    \item $\square^0 \in \calC$;
    \item If $C \in (\calC \cap \sCat_{N-1,\ast,\ast})$, then $\Sigma C \in \calC$;
    \item If $C,D \in \calC$, then $C \vee D \in \calC$.
\end{enumerate}
\end{lem}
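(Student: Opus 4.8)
The plan is to verify the two halves of the "smallest such subcategory" assertion separately: that $\Theta_{N,\ast,\ast}^\nat$ itself satisfies the three closure conditions, and that any full subcategory $\calC \subseteq \sCat_{N,\ast,\ast}$ satisfying them already contains $\Theta_{N,\ast,\ast}^\nat$. Since conditions (1)--(3) are visibly stable under arbitrary intersection, this is exactly what it means for $\Theta_{N,\ast,\ast}^\nat$ to be the smallest such subcategory.

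For the first half, I would simply assemble the observations already made above: $\square^0$ is the terminal, canonically bipointed object of $\Theta_N$, so (1) holds; \cref{def:susp} records that $\Sigma$ restricts to a functor $\Theta_{N-1} \to \Theta_{N,\ast,\ast}^\nat$, so (2) holds (bearing in mind that condition (2) feeds a bipointed object into $\Sigma$ only after forgetting its bipointing, so there is no ambiguity in the source); and \cref{eg:wedge-theta} records that $\vee$ restricts to a bifunctor on $\Theta_{N,\ast,\ast}^\nat$, so (3) holds.

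The substance is in the second half, and the key external input I would invoke is Berger's wreath-product presentation of $\Theta$ (\cite{berger}): an object $\theta \in \Theta_N$ carries a canonical object set $\{0, 1, \dots, k\}$, each $\Hom_\theta(i,i+1)$ is again (naturally) an object of $\Theta_{N-1}$, and there is a canonical identification in $\sCat_N$, compatible with the natural bipointings,
\[
\theta \;\cong\; \Sigma\bigl(\Hom_\theta(0,1)\bigr) \vee \Sigma\bigl(\Hom_\theta(1,2)\bigr) \vee \cdots \vee \Sigma\bigl(\Hom_\theta(k-1,k)\bigr)
\]
(the empty wedge, for $k = 0$, being $\square^0$). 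Granting this, I would induct on $N$. The base case $N = 0$ is immediate from (1), since $\Theta_{0,\ast,\ast}^\nat = \{\square^0\}$; and the case $N = \omega$ reduces to the finite cases because every object of $\Theta_\omega$ lies in some $\Theta_{N'}$ with $N' < \omega$. For the inductive step, given $\calC$ satisfying (1)--(3), I would first check that $\calC \cap \sCat_{N-1,\ast,\ast}$ is a full subcategory of $\sCat_{N-1,\ast,\ast}$ satisfying the corresponding conditions in dimension $N-1$, so that $\Theta_{N-1,\ast,\ast}^\nat \subseteq \calC \cap \sCat_{N-1,\ast,\ast}$ by the inductive hypothesis. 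Then for any $\theta \in \Theta_{N,\ast,\ast}^\nat$ with $k \geq 1$: each $\Hom_\theta(i,i+1)$, with its natural bipointing, lies in $\Theta_{N-1,\ast,\ast}^\nat$, hence in $\calC$, so each $\Sigma(\Hom_\theta(i,i+1))$ lies in $\calC$ by (2); and $\theta$, being a nonempty finite iterated wedge of these, then lies in $\calC$ by repeated use of (3) (using that $\vee$ is associative, being an iterated pushout). Fullness of $\calC$ upgrades this to the containment of full subcategories $\Theta_{N,\ast,\ast}^\nat \subseteq \calC$.

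The main obstacle I anticipate is not any isolated difficulty but rather getting the bookkeeping exactly right in the structural identification above: one must confirm that the wedge-of-suspensions decomposition built into the wreath-product description of $\Theta_N$ — equivalently, the identification of objects of $\Theta_N$ with globular sums — matches, on the nose, the operations $\vee$ and $\Sigma$ of \cref{def:wedge} and \cref{def:susp} as computed in $\sCat_N$, and that the natural bipointing of $\theta$ is precisely the output bipointing of the corresponding iterated wedge. The remaining steps — transferring conditions (1)--(3) to $\calC \cap \sCat_{N-1,\ast,\ast}$ and associativity of $\vee$ — are routine verifications.
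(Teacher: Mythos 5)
Your proposal is correct and follows essentially the same route as the paper: both halves are handled identically, and your wedge-of-suspensions decomposition $\theta \cong \Sigma(\Hom_\theta(0,1)) \vee \cdots \vee \Sigma(\Hom_\theta(k-1,k))$ is exactly the paper's decomposition of $\theta$ into two-object pieces $\theta_i = \Sigma\zeta_i$, followed by the same induction on $N$ with the same reduction of $N=\omega$ to the finite cases. You merely spell out a few routine verifications (transferring the closure conditions to $\calC \cap \sCat_{N-1,\ast,\ast}$, compatibility of bipointings) that the paper leaves implicit.
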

\begin{proof}
We have observed in \cref{def:susp} and \cref{eg:wedge-theta} that $\Theta_{N,\ast,\ast}^\nat$ satisfies (2) and (3) respectively; (1) is trivial. Conversely, if $\theta \in \Theta_{N,\ast,\ast}^\nat$, then $\theta$ decomposes as a wedge sum $\theta = \theta_1 \vee\cdots \vee \theta_k$ where each $\theta_i$ has exactly two objects. Moreover, if $\theta \in \Theta_{N,\ast,\ast}^\nat$ has exactly two objects, then $\theta = \Sigma \zeta$ is the suspension of some $\zeta \in \Theta_{N-1}$. Equipping $\zeta$ with its natural bipointing, the claim now follows for finite $N$ by induction on $N$, and thence also for $N = \omega$.
\end{proof}

\section{The main theorem}\label{sec:cubes}
Having discussed the structure of $\Theta$ in sufficient detail, our next aim is to remove $\Theta$ from the picture entirely, and reduce \cref{thm:mainthm} to the following two statements, which mention only cubes and are not about $\Theta$ at all:

\begin{thm}\label{thm:wedge}
For any $m,n \in \nats$, there are maps
\[
\begin{tikzcd}
\iota_{m,n} : \square^m \vee \square^n \ar[r,shift left] & \square^m \otimes^\lax \square^n : \rho_{m,n} \ar[l,shift left]
\end{tikzcd}
\]
exhibiting the wedge sum $\square^m \vee \square^n$ as a retract in $\sCat_{N,\ast,\ast}$ of $\square^{m+n}$.
\end{thm}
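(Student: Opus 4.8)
The plan is to exhibit explicit functors $\iota_{m,n}$ and $\rho_{m,n}$ between $\square^m \vee \square^n$ and $\square^{m+n} = \square^m \otimes^\lax \square^n$ and check that $\rho_{m,n} \circ \iota_{m,n} = \id$. First I would set up the right picture of the objects involved. The cube $\square^{m+n}$ has object-set $\{0,1\}^{m+n}$, which I will write as pairs $(\vec a, \vec b)$ with $\vec a \in \{0,1\}^m$ and $\vec b \in \{0,1\}^n$; the natural bipointing picks out $(\vec 0, \vec 0)$ and $(\vec 1, \vec 1)$. The wedge $\square^m \vee \square^n$ is the pushout of $\square^m$ and $\square^n$ identifying the top object $\vec 1$ of $\square^m$ with the bottom object $\vec 0$ of $\square^n$; concretely its object-set is $\{0,1\}^m \sqcup_{\{*\}} \{0,1\}^n$, and by \cref{lem:susp-univ}-style reasoning (or directly from the pushout) a functor out of it is the same as a functor out of $\square^m$ together with a functor out of $\square^n$ that agree on the shared object.

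To build $\iota_{m,n}$: the inclusion $\square^m = \square^m \otimes^\lax \square^0 \hookrightarrow \square^m \otimes^\lax \square^n$ sending $\vec a \mapsto (\vec a, \vec 0)$ and the inclusion $\square^n = \square^0 \otimes^\lax \square^n \hookrightarrow \square^m \otimes^\lax \square^n$ sending $\vec b \mapsto (\vec 1, \vec b)$ agree on the wedge point (both send it to $(\vec 1, \vec 0)$), so by the universal property of the pushout they assemble into a single functor $\iota_{m,n} : \square^m \vee \square^n \to \square^{m+n}$; it is bipointed because the first point goes to $(\vec 0, \vec 0)$ and the second to $(\vec 1, \vec 1)$. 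To build $\rho_{m,n}$: on objects send $(\vec a, \vec b)$ to the image of $\vec a \wedge (\text{``}\vec b = \vec 0\text{''})$ — more precisely, if $\vec b = \vec 0$ send $(\vec a, \vec 0)$ into the $\square^m$-summand as $\vec a$, and otherwise send $(\vec a, \vec b)$ into the $\square^n$-summand as $\vec b$. The cleanest way to make this a functor is to realize it as induced by a map of the underlying data: I would use \cref{lem:susp-po} or the biclosed structure to express $\rho$ as $\square^m \otimes^\lax \square^n \to \square^m \otimes^\lax \square^1 \to \square^m \vee \square^n$, where the first map is $\id \otimes^\lax c$ for the collapse $\square^n \to \square^1$ (sending $\vec 0 \mapsto 0$ and every other vertex to $1$ — note this is a genuine $\omega$-functor since the poset $\{0,1\}^n$ maps order-preservingly, and one checks it lands in $\square^1$ rather than just $\square^n$), and the second map is the canonical comparison $\square^m \otimes^\lax \square^1 \to \square^m \vee \square^n$ which on the "bottom" face $\square^m \otimes^\lax \{0\}$ is the $\square^m$-inclusion and on the "top" face $\square^m \otimes^\lax \{1\}$ collapses everything to the wedge point, then includes the $\square^n$-summand via the constant-at-$\vec 0$... — actually this last comparison needs the target to absorb the top face, which it does exactly because in the wedge the top of $\square^m$ is the bottom of $\square^n$ and $\square^n$ has a map from $\square^0$; so $\square^m \otimes^\lax \square^1 \to \square^m \cup_{\square^0}\square^0 \hookrightarrow \square^m \vee \square^n$ works, using $\Sigma$-type collapse of the cylinder $\square^m \otimes^\lax \square^1$ onto $\square^m$ along the constant face. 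I would then verify $\rho_{m,n}\circ\iota_{m,n} = \id_{\square^m \vee \square^n}$ by checking it separately on the $\square^m$- and $\square^n$-summands: on $\square^m$ it is $\square^m \to \square^m\otimes^\lax\square^n \to \square^m\otimes^\lax\square^1 \to \square^m \vee\square^n$, which is the identity because $\square^n \to \square^1$ sends $\vec 0 \mapsto 0$ and the cylinder-collapse restricted to the bottom face is the identity; on $\square^n$ it is $\square^n \to \square^m\otimes^\lax\square^n \to \square^m\otimes^\lax\square^1 \to \square^m\vee\square^n$, and since the first inclusion lands in $\{\vec 1\}\otimes^\lax\square^n$ and $\square^n\to\square^1$ sends all of $\square^n\setminus\{\vec 0\}$ to $1$, this factors through $\{\vec 1\}\otimes^\lax\square^1 \cong \square^1$, collapsing appropriately onto the wedge point and then re-including $\square^n$ — here I need that the composite $\square^n \to \square^1 \to \square^n$ in the last leg is the identity on the nose, which forces the "re-inclusion" leg of the wedge comparison to be built from $\id_{\square^n}$ rather than a collapse. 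This is the point where I would have to be most careful.

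The main obstacle, and the step I expect to absorb most of the work, is verifying that $\rho_{m,n}$ is a well-defined strict $\omega$-functor — i.e. that it respects all the higher-cell composition and whiskering. The issue is that the Gray tensor product $\square^m \otimes^\lax \square^n$ has genuinely noninvertible higher cells "mixing" the two factors (the lax interchangers), and the collapse $\square^n \to \square^1$ must send these to something coherent in $\square^m \otimes^\lax \square^1$; I would discharge this by never writing the map cell-by-cell but only as the composite of the functorially-defined pieces $\id_{\square^m} \otimes^\lax c$ and the cylinder collapse $\square^m \otimes^\lax \square^1 \to \square^m \vee \square^n$, each of which is manifestly a functor (the former by functoriality of $\otimes^\lax$, the latter by \cref{lem:susp-po} identifying $\square^m \otimes^\lax \square^1$ with a pushout built from $\Sigma$-data, or by the universal property of the wedge pushout). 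A secondary point to nail down is that the collapse $c : \square^n \to \square^1$ really does land in $\square^1$ and is a map of gaunt/strict $\omega$-categories — this is clear because $\square^n$ as a strict $\omega$-category is freely generated in a precise sense by the cube poset and $c$ is induced by the poset map $\{0,1\}^n \to \{0,1\}$, $\vec b \mapsto \bigvee_i b_i$, which is order-preserving and hence extends. Once these functoriality points are in hand, the retraction identity is the bookkeeping above, and bipointedness of both maps is immediate from the endpoint computations.
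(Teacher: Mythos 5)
Your section $\iota_{m,n}$ is exactly the paper's (Lemma~\ref{def:iota}), and your instinct to build $\rho_{m,n}$ only out of manifestly functorial pieces (rather than cell-by-cell on the interchangers) is the right one. But the particular factorization you choose, $\square^m \otimes^\lax \square^n \xrightarrow{\id \otimes^\lax c} \square^m \otimes^\lax \square^1 \to \square^m \vee \square^n$ with $c$ collapsing $\square^n$ onto $\square^1$, cannot yield a retraction of $\iota_{m,n}$. The $\square^n$-summand of the wedge is included by $\iota_{m,n}$ as $\{\vec 1\} \otimes^\lax \square^n$, and your first map sends this into $\{\vec 1\} \otimes^\lax \square^1$; your second map then sends the whole top face to the wedge point. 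So the composite $\rho_{m,n}\iota_{m,n}$ is constant at the wedge point on the $\square^n$-summand, not the identity. You flag this yourself when you observe that you would need $\square^n \to \square^1 \to \square^n$ to be the identity ``on the nose'' — but that is unsatisfiable for $n \geq 2$ (the identity of $\square^n$ cannot factor through $\square^1$, by counting objects), and no choice of ``re-inclusion leg'' repairs it, because the information in the $\square^n$ factor has already been destroyed by $c$. This is a genuine gap, not a bookkeeping issue: any correct $\rho_{m,n}$ must carry the $\square^n$-direction through intact.

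The paper's proof gets around exactly this by inducting on $m+n$ and peeling off one tensor factor of $\square^1$ at a time, never collapsing the whole second factor at once. Concretely, it tensors the inductively known retraction $\rho_{m,n}$ with $\id_{\square^1}$ to retract $\square^m \otimes^\lax \square^n \otimes^\lax \square^1$ onto $(\square^m \otimes^\lax \square^1) \cup_{\square^1} (\square^n \otimes^\lax \square^1)$, and then retracts that onto $\square^m \vee \square^{n+1}$ by applying $\rho_{m,1}$ (again available by induction) to the $\square^m \otimes^\lax \square^1$ piece while acting by the \emph{identity} on the $\square^n \otimes^\lax \square^1 = \square^{n+1}$ piece; this is the diagram (\ref{eqn:def-zeta}), with base cases $m=0$, $n=0$, $m=n=1$ handled by hand. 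If you want to salvage your write-up, replace the single global collapse $c : \square^n \to \square^1$ with this one-factor-at-a-time induction; the only map you ever need to write down explicitly by hand is then $\rho_{1,1} : \square^2 \to \square^1 \vee \square^1$, which is the content of Lemma~\ref{lem:base-wedge}.
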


\begin{thm}\label{thm:susp}
For any $n \in \nats$, there are maps 
\[
\begin{tikzcd}
\phi_n : \Sigma \square^n \ar[r,shift left] & \square^{n+1} : \psi_n \ar[l, shift left]
\end{tikzcd}
\]
exhibiting the suspension $\Sigma \square^n$ as a retract in $\sCat_{N,\ast,\ast}$ of $\square^{n+1}$.
\end{thm}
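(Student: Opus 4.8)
The plan is to produce both maps $\phi_n \colon \Sigma\square^n \to \square^{n+1}$ and $\psi_n \colon \square^{n+1} \to \Sigma\square^n$ explicitly, and then check $\psi_n \circ \phi_n = \id$. It is cleanest to build $\phi_n$ using the pushout description of the suspension from \cref{lem:susp-po}, namely $\Sigma\square^n = (\square^1 \otimes^\lax \square^n) \cup_{\partial\square^1 \otimes^\lax \square^n} \partial\square^1$, together with the identification $\square^{n+1} = \square^1 \otimes^\lax \square^n$. Under this identification there is an evident quotient map $q \colon \square^{n+1} = \square^1 \otimes^\lax \square^n \to \Sigma\square^n$ collapsing the two copies $\{0\}\otimes^\lax\square^n$ and $\{1\}\otimes^\lax\square^n$ of $\square^n$ to the points $0$ and $1$ of the suspension; this is the natural candidate for $\psi_n$, and it is manifestly bipointed. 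For $\phi_n$, I would use the universal property of the suspension (\cref{lem:susp-univ}): a bipointed functor $\Sigma\square^n \to \square^{n+1}$ is the same as a functor $\square^n \to \square^{n+1}(\vec 0, \vec 1)$ into the hom-$\omega$-category from the bottom vertex to the top vertex of the cube. So the heart of the matter is to exhibit a section $s \colon \square^n \to \square^{n+1}(\vec 0,\vec 1)$ of the natural map $\square^{n+1}(\vec 0,\vec 1) \to \square^n$ induced by $q$.

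The key structural fact to use is the explicit combinatorial description of hom-$\omega$-categories of cubes (this is presumably the content of the lemma proved in \cref{sec:app}). Concretely, $\square^{n+1}(\vec 0,\vec 1)$ is generated by the maximal-dimensional "staircase" paths from $\vec 0$ to $\vec 1$ in the cube, i.e.\ the $(n+1)!$ chains through the Boolean lattice $\{0,1\}^{n+1}$, with the higher cells recording the Gray-interchange homotopies between adjacent staircases; this hom-category is itself known to be (a subdivision closely related to) the $n$-cube, or at least to receive a canonical map from $\square^n$ admitting a section. I would pick out the section $s$ by a canonical choice of staircase: send the generating $i$-cell of the $i$-th factor of $\square^n$ to the interchange cell in $\square^{n+1}(\vec 0,\vec 1)$ that "slides past" the extra first coordinate. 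Equivalently, one can describe $\phi_n$ directly on generators of $\Sigma\square^n$: it sends the bottom object to $\vec 0$, the top object to $\vec 1$, and the generating $k$-cell of $\square^n$ (for each $k \le n$) to the corresponding $(k{+}1)$-cell of $\square^{n+1}$ obtained by tensoring on the left with the generating $1$-cell of $\square^1$, i.e.\ $\phi_n$ is essentially $[1]\otimes^\lax(-)$ followed by the quotient $\square^1\otimes^\lax\square^n \twoheadrightarrow \Sigma\square^n$... no, rather its section. Either way, once $\phi_n$ is written on generators one must verify it respects all the defining relations of $\square^n$ (source/target compatibility and the Gray interchange relations); this is a finite check that I would organize by induction on $n$, using $\square^{n+1} = \square^1 \otimes^\lax \square^n$ and the functoriality and associativity of $\otimes^\lax$.

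The verification that $\psi_n \circ \phi_n = \id_{\Sigma\square^n}$ then reduces, via \cref{lem:susp-univ}, to checking that the composite $\square^n \xrightarrow{s} \square^{n+1}(\vec 0,\vec 1) \xrightarrow{q_\ast} \square^n$ is the identity, which is immediate from the choice of $s$ as a section; and one checks separately, which is trivial, that $\phi_n$ and $\psi_n$ are maps of bipointed objects (both send first point to $\vec 0$ and second to $\vec 1$). I expect the main obstacle to be the explicit and correct description of $\square^{n+1}(\vec 0,\vec 1)$ and the verification that $\phi_n$ is a well-defined $\omega$-functor --- that is, that the interchange cells in the cube one wants to hit actually satisfy the relations coming from $\square^n$ --- since the Gray tensor product's higher cells proliferate combinatorially and the bookkeeping is delicate; this is exactly the kind of statement I would expect to be isolated into the appendix lemma (\cref{sec:app}) and then cited here. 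Everything else --- the pushout presentation, the universal property, and the retraction identity --- is then formal.
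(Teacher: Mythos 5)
Your setup matches the paper's: $\psi_n$ is indeed taken to be the quotient map coming from the pushout presentation of \cref{lem:susp-po}, and via \cref{lem:susp-univ} the problem does reduce to producing a section $\square^n \to \Hom_{\square^{n+1}}(\vec 0,\vec 1)$ of the map induced by that quotient. But the core of your argument --- the actual construction of that section --- is deferred to an explicit combinatorial description of $\Hom_{\square^{n+1}}(\vec 0,\vec 1)$ that you expect to find in the appendix, and no such description is there. The appendix (\cref{lem:stupid-lem}, \cref{lem:next-stupid}, \cref{lem:next-next-stupid}) only proves that certain degenerate hom-categories of the form $\Hom_{X \otimes^\lax \square^1}((x,\epsilon),(x,\epsilon'))$ are terminal and that certain coordinate inclusions are fully faithful; it says nothing about the ``long'' hom-category. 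Moreover your heuristic description of that hom-category as ``(a subdivision closely related to) the $n$-cube'' is not right: its objects are the $(n+1)!$ maximal chains through $\{0,1\}^{n+1}$, so already for $n+1=3$ it is hexagonal rather than square, and writing down a well-defined $\omega$-functor out of $\square^n$ into it by hand is exactly the kind of combinatorics-heavy verification the paper is structured to avoid.

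The missing idea is the inductive factorization. The paper observes (\cref{obs:1-loc}) that $\psi_{n+1} = \xi_n \circ (\psi_n \otimes^\lax \id_{\square^1})$, where $\xi_n : (\Sigma\square^n) \otimes^\lax \square^1 \to \Sigma\square^{n+1}$ merely collapses two explicit $1$-morphisms. Granting inductively that $\Sigma\square^n$ is a retract of a cube, the appendix lemma shows those two $1$-morphisms live in \emph{terminal} hom-categories, so collapsing them does not change the long hom-category (\cref{eg:po-anal}); hence $\xi_n$ has a (unique) bipointed section $\chi_n$, and $\phi_{n+1} = (\phi_n \otimes^\lax \id_{\square^1}) \circ \chi_n$. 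This sidesteps any global description of $\Hom_{\square^{n+1}}(\vec 0,\vec 1)$. Without this reduction (or an honest completion of the direct combinatorial construction you sketch), your proposal does not yet constitute a proof.
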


\cref{thm:wedge} and \cref{thm:susp} will be proven in the next two sections. But first let us deduce our main theorems.

\begin{cor}\label{cor:idem-closed}
The idempotent completion $\widetilde{\square_{\ast,\ast}^\nat}$ of the naturally-bipointed cube category $\square_{\ast,\ast}^\nat$ is closed in $\sCat_{\ast,\ast}$ under suspensions and wedge sums.
\end{cor}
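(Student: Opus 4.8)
The plan is to deduce \cref{cor:idem-closed} formally from \cref{thm:wedge} and \cref{thm:susp}, using only the stability of idempotent completions under retracts. The key point is purely categorical: if $X$ is an object of a category $\calD$ and $Y$ is a retract of $X$ (i.e.\ there are maps $i \colon Y \to X$, $r \colon X \to Y$ with $r i = \id_Y$), then $Y$ lies in the idempotent completion of any full subcategory containing $X$, because $Y$ is the splitting of the idempotent $e = i r \colon X \to X$ on $X$. So I would first record this lemma (or just invoke it), phrased for the idempotent completion $\widetilde{\calC} \subseteq \calD$ of a full subcategory $\calC$: any retract in $\calD$ of an object of $\calC$ lies in $\widetilde{\calC}$.

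Now I would apply this in $\calD = \sCat_{\ast,\ast}$ with $\calC = \square_{\ast,\ast}^\nat$. For wedge sums: given $m, n \in \nats$, \cref{thm:wedge} exhibits $\square^m \vee \square^n$ as a retract of $\square^{m+n}$ via $\iota_{m,n}, \rho_{m,n}$ in $\sCat_{\ast,\ast}$; since $\square^{m+n} \in \square_{\ast,\ast}^\nat$, the lemma gives $\square^m \vee \square^n \in \widetilde{\square_{\ast,\ast}^\nat}$. More generally, any $A, B \in \widetilde{\square_{\ast,\ast}^\nat}$ are retracts of cubes $\square^m, \square^n$, so $A \vee B$ is a retract of $\square^m \vee \square^n$ (the wedge sum is a bifunctor on bipointed objects by \cref{def:wedge}, hence preserves retracts), which in turn is a retract of $\square^{m+n}$; composing the retractions shows $A \vee B \in \widetilde{\square_{\ast,\ast}^\nat}$. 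The argument for suspensions is identical, using \cref{thm:susp}: $\Sigma \square^n$ is a retract of $\square^{n+1}$, and functoriality of $\Sigma$ (together with the fact that $\widetilde{\square_{\ast,\ast}^\nat}$ is contained in $\sCat_{\ast,\ast}$, so $\Sigma$ of one of its objects makes sense as a bipointed object) propagates this to arbitrary $C \in \widetilde{\square_{\ast,\ast}^\nat}$.

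There is essentially no obstacle here — this corollary is a bookkeeping consequence of the two main theorems. The one point requiring a moment's care is the interaction with bipointings: I must check that the retract data $\iota_{m,n}, \rho_{m,n}$ and $\phi_n, \psi_n$ are genuinely morphisms in $\sCat_{\ast,\ast}$ (compatible with the bipointings), which is exactly what \cref{thm:wedge} and \cref{thm:susp} assert, and that $\vee$ and $\Sigma$ are functorial for bipointed maps so that retracts are sent to retracts — this is \cref{def:wedge} and \cref{def:susp} respectively. All the real work is deferred to \cref{sec:wedge} and \cref{sec:susp}.
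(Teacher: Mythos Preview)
Your proposal is correct and follows essentially the same approach as the paper's own proof: reduce via functoriality of $\vee$ and $\Sigma$ to the case of cubes, then invoke \cref{thm:wedge} and \cref{thm:susp}. The paper's version is terser (it does not spell out the retract/idempotent-completion lemma), but the logical content is identical.
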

\begin{proof}
Let $X, Y \in \widetilde{\square_{\ast,\ast}^\nat}$. Then $X,Y$ are bipointed retracts of some naturally-bipointed cubes $\square^m, \square^n$. By \cref{thm:wedge}, $\square^m \vee \square^n$ is a bipointed retract of the naturally-bipointed $\square^{m+n}$. By functoriality of the wedge construction, $X \vee Y$ is also a bipointed retract of $\square^{m+n}$. Similarly, by functoriality of suspension, $\Sigma X$ is a bipointed retract of $\Sigma \square^{m}$, and hence a bipointed retract of the naturally-bipointed $\square^{m+1}$ by \cref{thm:susp}.
\end{proof}

\begin{cor}\label{thm:mainthm}
$\Theta_{\ast,\ast}^\nat \subseteq \widetilde{\square_{\ast,\ast}^\nat}$ and $\Theta \subseteq \widetilde{\square}$.
\end{cor}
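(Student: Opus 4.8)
The plan is to deduce \cref{thm:mainthm} directly from \cref{cor:idem-closed} together with the inductive characterization of $\Theta$ supplied by \cref{lem:theta-ind}. First I would prove the bipointed statement $\Theta_{\ast,\ast}^\nat \subseteq \widetilde{\square_{\ast,\ast}^\nat}$. By \cref{lem:theta-ind}, $\Theta_{\ast,\ast}^\nat$ is the \emph{smallest} full subcategory $\calC \subseteq \sCat_{\ast,\ast}$ satisfying the three closure conditions: containing $\square^0$, closed under suspension of $(\infty,N-1)$-objects, and closed under wedge sums. So it suffices to check that $\widetilde{\square_{\ast,\ast}^\nat}$ (viewed as a full subcategory of $\sCat_{\ast,\ast}$) satisfies those three conditions. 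Condition (1) is immediate since $\square^0$ is a cube (indeed the terminal object). Conditions (2) and (3) are exactly the content of \cref{cor:idem-closed}. Hence $\Theta_{\ast,\ast}^\nat \subseteq \widetilde{\square_{\ast,\ast}^\nat}$.

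Next I would descend from the bipointed categories to the plain ones. There is a forgetful functor $U : \sCat_{\ast,\ast} \to \sCat$ (and similarly $U: \Gaunt_{\ast,\ast}\to\Gaunt$) which is full (it does not impose any extra conditions on morphisms once one forgets the bipointing data) and which sends $\square^\nat_{\ast,\ast}$ onto $\square$ and $\Theta^\nat_{\ast,\ast}$ onto $\Theta$; moreover every object of $\Theta$ (resp. $\square$) admits \emph{some} bipointing exhibiting it as an object in the image of $U$. Since $U$ preserves retracts, applying $U$ to the inclusion $\Theta_{\ast,\ast}^\nat \subseteq \widetilde{\square_{\ast,\ast}^\nat}$ shows that every object of $\Theta$ is a retract, in $\sCat_\omega$, of some cube, i.e.\ $\Theta \subseteq \widetilde{\square}$. (One must be slightly careful here: a retract diagram in $\sCat_{\ast,\ast}$ maps under $U$ to a retract diagram in $\sCat$, and the idempotent it splits lands in $\square$ because $U(\square^\nat_{\ast,\ast}) = \square$; so the target $\widetilde{\square}$ is computed with respect to the right subcategory.)

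The step I expect to be the main obstacle — or at least the one requiring the most care — is the passage between the bipointed and unbipointed settings, because the wedge-sum construction is genuinely only functorial for bipointed maps, so one cannot simply work in $\sCat$ throughout. The key point is that forgetting the bipointing is compatible with retracts and that the chosen natural bipointings on cubes and on $\Theta$-objects are coherent, so that the retract data constructed bipointedly in \cref{thm:wedge} and \cref{thm:susp} (and assembled in \cref{cor:idem-closed}) survives forgetting the basepoints. Once that compatibility is made explicit, the deduction is purely formal.

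\begin{proof}
By \cref{lem:theta-ind}, $\Theta_{\ast,\ast}^\nat$ is the smallest full subcategory of $\sCat_{\ast,\ast}$ closed under the three operations listed there. The full subcategory $\widetilde{\square_{\ast,\ast}^\nat} \subseteq \sCat_{\ast,\ast}$ contains $\square^0$, and by \cref{cor:idem-closed} it is closed under suspensions and wedge sums. Hence $\Theta_{\ast,\ast}^\nat \subseteq \widetilde{\square_{\ast,\ast}^\nat}$.

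For the second inclusion, let $\theta \in \Theta$ and equip it with its natural bipointing, so that $\theta \in \Theta_{\ast,\ast}^\nat$. By the inclusion just proved, $\theta$ is a bipointed retract of some naturally-bipointed cube $\square^n$. Applying the forgetful functor $\sCat_{\ast,\ast} \to \sCat$, which preserves retracts and sends $\square_{\ast,\ast}^\nat$ onto $\square$, we conclude that $\theta$ is a retract of $\square^n$ in $\sCat_\omega$ via maps factoring through $\square$, i.e.\ $\theta \in \widetilde{\square}$. Since $\theta \in \Theta$ was arbitrary, $\Theta \subseteq \widetilde{\square}$.
\end{proof}
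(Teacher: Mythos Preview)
Your proof is correct and follows essentially the same route as the paper: invoke \cref{lem:theta-ind} and \cref{cor:idem-closed} (together with $\square^0 \in \square^\nat_{\ast,\ast}$) for the bipointed inclusion, then forget bipointings for the second. One minor inaccuracy in your discussion: the forgetful functor $\sCat_{\ast,\ast} \to \sCat$ is faithful but not full (a map of underlying objects need not respect the chosen points); fortunately your actual proof only uses that functors preserve retracts, so this does not affect the argument.
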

\begin{proof}
As $\square^0 \in \square^\nat_{\ast,\ast}$, the first statement follows from \cref{lem:theta-ind} and \cref{cor:idem-closed}. The second statement follows by forgetting bipointings, since any bipointed retract is in particular a retract.
\end{proof}

\subsection{Applications to weak $(\infty,N)$-categories}\label{subsec:weak}

Although \cref{thm:mainthm} is a purely 1-categorical statement about strict $\omega$-categories, our motivating application is $\infty$-categorical in nature. First we need a lemma.

\begin{lem}\label{lem:theta-dense}
$\Theta$ is dense in $\Cat_\omega^\inc$ and in $\Cat_\omega$.
\end{lem}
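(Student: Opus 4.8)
The plan is to reduce the statement to the known density of $\Theta_N \subseteq \Cat_N$ for finite $N$, together with the description of $\Cat_\omega^{(\inc)}$ as a limit $\varprojlim_N \Cat_N^{(\inc)}$ promised in \cref{rmk:defs}. First I would recall that $\Cat_N^\inc$ is by definition the localization of $\Psh(\Theta_N)$ at the spine inclusions, and $\Cat_N$ a further localization at the Rezk maps; in either case the inclusion $\Theta_N \hookrightarrow \Cat_N^{(\inc)}$ is the composite of the Yoneda embedding with a localization, and this is well known to be dense (the nerve $\Cat_N^{(\inc)} \to \Psh(\Theta_N)$ is the inclusion of the local objects, hence fully faithful). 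So for each finite $N$ the statement holds with $\Theta_N$ in place of $\Theta$.

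The heart of the argument is then to pass to the limit $N \to \omega$. Write $\calC_\omega := \Cat_\omega^{(\inc)} = \varprojlim_N \Cat_N^{(\inc)}$, with the limit taken along the truncation functors $u_N : \Cat_{N+1}^{(\inc)} \to \Cat_N^{(\inc)}$. I would argue that the nerve functor $\calC_\omega \to \Psh(\Theta)$ is fully faithful as follows. Since $\Theta = \varinjlim_N \Theta_N$ (every object of $\Theta$ lies in some $\Theta_N$, and the inclusions $\Theta_N \hookrightarrow \Theta_{N+1}$ are fully faithful), we have $\Psh(\Theta) = \varprojlim_N \Psh(\Theta_N)$, the limit being along restriction functors $\Psh(\Theta_{N+1}) \to \Psh(\Theta_N)$. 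The key compatibility to check is that the square
\[
\begin{tikzcd}
\Cat_{N+1}^{(\inc)} \ar[r] \ar[d, "u_N"'] & \Psh(\Theta_{N+1}) \ar[d] \\
\Cat_N^{(\inc)} \ar[r] & \Psh(\Theta_N)
\end{tikzcd}
\]
commutes (up to natural equivalence), i.e.\ that restricting the nerve of an $(\infty,N+1)$-category along $\Theta_N \hookrightarrow \Theta_{N+1}$ computes the nerve of its $N$-truncation. This holds because $u_N$ has a fully faithful left adjoint (the inclusion of $(\infty,N)$-categories into $(\infty,N+1)$-categories), under which a representable $\theta \in \Theta_N \subseteq \Theta_{N+1}$ is sent to itself; so $\Hom(\theta, u_N X) \simeq \Hom(\theta, X)$ for $\theta \in \Theta_N$. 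Given this, the nerves assemble into a functor of limit diagrams, and a limit of fully faithful functors over a compatible system is fully faithful, giving that $\calC_\omega \to \Psh(\Theta)$ is fully faithful — i.e.\ $\Theta$ is dense in both $\Cat_\omega^\inc$ and $\Cat_\omega$.

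The main obstacle I anticipate is the bookkeeping around the limit of $\infty$-categories and the commuting square above: one must be careful that the truncation functors $u_N$ really do have the stated fully faithful left adjoint compatibly with the embeddings $\Theta_N \hookrightarrow \Theta_{N+1}$, and that ``a limit of fully faithful functors is fully faithful'' is applied correctly (it is true for a cone of functors over a diagram when the transition maps commute with them, since fully faithfulness is detected on mapping spaces and mapping spaces in a limit $\infty$-category are computed as limits). An alternative, perhaps cleaner route — which I would fall back on if the limit manipulations get delicate — is to invoke directly that $\Cat_\omega^{(\inc)}$ is presentable and generated under colimits by $\Theta$ (this is essentially built into the definition as a localization of $\Psh(\Theta)$ once one knows $\Psh(\Theta) = \varprojlim \Psh(\Theta_N)$), and then density is immediate from the general fact that the objects of a localizing/reflective subcategory of a presheaf category, restricted along Yoneda, form a full subcategory. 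Either way, no new hard input is needed beyond the finite-$N$ case and the limit description of $\Cat_\omega^{(\inc)}$; the content is organizational.
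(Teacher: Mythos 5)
Your argument is correct, but it takes the long way around compared to the paper. In \cref{subsec:notation} the paper \emph{defines} $\Cat_\omega^\inc$ to be the localization of $\Psh(\Theta)$ at the spine inclusions, and $\Cat_\omega$ as a further localization at the Rezk maps; the paper's entire proof is then the observation that the nerve functor of such a localization is the (fully faithful) inclusion of the local objects into $\Psh(\Theta)$ --- exactly the ``alternative, cleaner route'' you describe as your fallback, except that no limit manipulation is needed even to set it up, since the localization description at $N=\omega$ is definitional rather than derived. Your primary route --- proving density of $\Theta_N$ in $\Cat_N^{(\inc)}$ for finite $N$, identifying $\Psh(\Theta)$ with $\varprojlim_N \Psh(\Theta_N)$, checking that the nerves commute with the truncations $u_N$ via their fully faithful left adjoints, and concluding that a compatible limit of fully faithful functors is fully faithful --- is sound, and is essentially the content the paper relegates to \cref{rmk:defs}, where the definitional localization is reconciled with the model-independent description $\Cat_\omega^{(\inc)} = \varprojlim_N \Cat_N^{(\inc)}$. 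So your proof buys something real if one insists on starting from the limit definition, but given the paper's conventions it front-loads work that the lemma itself does not require; the only small caution is that in your fallback you should say the local objects form a \emph{reflective} full subcategory (fully faithfulness of the nerve), not merely that $\Theta$ generates under colimits, since generation alone does not give density.
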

\begin{proof}
As recalled in \cref{subsec:notation}, $\Cat_\omega^\inc$ and $\Cat_\omega$ are localizations of $\Psh(\Theta)$ (at the spine inclusions; respectively at the spine inclusions and Rezk maps). As such, the restricted nerve functors $\Cat_\omega^\inc \to \Psh(\Theta)$ and $\Cat_\omega \to \Psh(\Theta)$ are fully faithful as desired.
\end{proof}

\begin{cor}\label{cor:dense}
The cube category $\square$ and its idempotent completion $\widetilde \square$ are dense in $\Cat_\omega^\inc$ and in $\Cat_\omega$.
\end{cor}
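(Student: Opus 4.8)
The plan is to deduce \cref{cor:dense} as a formal consequence of \cref{thm:mainthm} and \cref{lem:theta-dense}, using only two standard facts about density recalled in the introduction: (i) a full subcategory inclusion is dense if and only if its idempotent completion is dense, and (ii) if $\calC \subseteq \calC' \subseteq \calD$ with $\calC \subseteq \calD$ dense, then $\calC' \subseteq \calD$ is dense. No new geometry or combinatorics of cubes is needed here; the content has already been absorbed into \cref{thm:mainthm}.

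First I would record the chain of inclusions of full subcategories of $\Cat_\omega^\inc$ (resp.\ $\Cat_\omega$): by \cref{thm:mainthm} we have $\Theta \subseteq \widetilde\square$, and of course $\square \subseteq \widetilde\square$, while both sit inside $\Cat_\omega^\inc$ and $\Cat_\omega$ via the nerve embeddings. By \cref{lem:theta-dense}, $\Theta$ is dense in each of these $(\infty,1)$-categories. Applying fact (ii) with $\calC = \Theta$ and $\calC' = \widetilde\square$ immediately gives that $\widetilde\square$ is dense in $\Cat_\omega^\inc$ and in $\Cat_\omega$. Then applying fact (i) to the inclusion $\square \subseteq \Cat_\omega^\inc$ (resp.\ $\square \subseteq \Cat_\omega$), whose idempotent completion inside that ambient category is precisely $\widetilde\square$, yields that $\square$ itself is dense, completing the proof.

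The only point requiring a word of care is that fact (i) refers to the idempotent completion of $\square$ \emph{computed inside} the ambient $(\infty,1)$-category, and one must check this agrees with the abstract idempotent completion $\widetilde\square$ appearing in \cref{thm:mainthm}. Since $\square \to \Cat_\omega^\inc$ is fully faithful and $\Cat_\omega^\inc$ is idempotent-complete (being presentable), the closure of $\square$ under retracts in $\Cat_\omega^\inc$ is a model for $\widetilde\square$; I would cite \cite{htt} for idempotent-completeness of presentable $\infty$-categories and for the universal property of idempotent completion. I expect this bookkeeping — matching the two notions of idempotent completion — to be the only mild obstacle, and it is genuinely routine; the substance of \cref{cor:dense} lies entirely in the earlier theorems.

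\begin{proof}
By \cref{lem:theta-dense}, $\Theta$ is dense in $\Cat_\omega^\inc$ and in $\Cat_\omega$. By \cref{thm:mainthm}, $\Theta \subseteq \widetilde\square$ as full subcategories of $\Cat_\omega^\inc$ (resp.\ $\Cat_\omega$). Since a full subcategory containing a dense full subcategory is itself dense, $\widetilde\square$ is dense in $\Cat_\omega^\inc$ and in $\Cat_\omega$. Finally, $\Cat_\omega^\inc$ and $\Cat_\omega$ are presentable, hence idempotent-complete; as $\square \hookrightarrow \Cat_\omega^\inc$ (resp.\ $\Cat_\omega$) is fully faithful, the closure of $\square$ under retracts in the ambient category realizes its idempotent completion $\widetilde\square$. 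Since a full subcategory inclusion is dense if and only if the inclusion of its idempotent completion is dense, it follows that $\square$ is dense in $\Cat_\omega^\inc$ and in $\Cat_\omega$.
\end{proof}
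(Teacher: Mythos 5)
Your proof is correct and follows essentially the same route as the paper: deduce density of $\widetilde\square$ from \cref{lem:theta-dense} and \cref{thm:mainthm} via the inclusion $\Theta \subseteq \widetilde\square$, then pass back to $\square$ using the fact that density is unaffected by idempotent completion. The extra remark identifying the abstract idempotent completion with the retract-closure inside the presentable ambient category is a reasonable piece of bookkeeping that the paper leaves implicit.
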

\begin{proof}
We have full subcategory inclusions $\Theta \subset \square \subset \Cat_\omega \subset \Cat_\omega^\inc$. By \cref{lem:theta-dense}, it follows that $\widetilde\square \subset \Cat_\omega$ and $\widetilde \square \subset \Cat_\omega^\inc$ are dense inclusions. Because density is not affected by idempotent completion, $\square \subset \Cat_\omega$ and $\square \subset \Cat_\omega^\inc$ are dense inclusions as well.
\end{proof}

\begin{cor}\label{cor:dense2}
Let $N \in \nats \cup \{\omega\}$. The functors $\square \to \Cat_N^\inc$ and $\square \to \Cat_N$, which localize all $n$-morphisms for $n \geq N+1$, are dense.
\end{cor}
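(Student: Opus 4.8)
The plan is to bootstrap from the case $N = \omega$ (Corollary \ref{cor:dense}) using the compatibility of the functors $\Cat_{N+1}^{(\inc)} \to \Cat_N^{(\inc)}$ alluded to in \cref{rmk:defs}, together with the fact that density is a condition that can be checked after passing to a larger ambient category. Concretely, for each $N$ we have a commuting triangle of functors $\square \to \Cat_N^\inc \to \Psh(\square)$ in which the second leg is the restricted nerve; we want this composite (equivalently, the nerve $\Cat_N^\inc \to \Psh(\square)$) to be fully faithful. The key structural input is that the localization functor $L_N : \Psh(\Theta) \to \Cat_N^\inc$ (which inverts the spine inclusions together with the morphisms localizing $n$-cells for $n \geq N+1$) has a fully faithful right adjoint, so that $\Cat_N^\inc$ sits inside $\Psh(\Theta)$ as the full subcategory of local objects; and similarly $\Cat_\omega^\inc \subseteq \Cat_N^\inc$ is a full subcategory (the further-local objects). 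So we have a chain of full inclusions $\Cat_\omega^\inc \subseteq \Cat_N^\inc \subseteq \Psh(\Theta)$, and the composite $\square \to \Cat_N^\inc$ factors the dense inclusion $\square \to \Cat_\omega^\inc$ that we get from the full faithfulness of $\Theta \subseteq \widetilde\square$.

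First I would record that $\square \to \Cat_\omega^\inc$ is dense — this is Corollary \ref{cor:dense} — and unwind what this gives: for all cubes $\square^m, \square^n$ and all objects $X$ of $\Cat_\omega^\inc$, the canonical map $\Map_{\Cat_\omega^\inc}(\square^n, X) \to \Map_{\Psh(\square)}(\square^n, \text{nerve}(X))$ is an equivalence, and more to the point the nerve $\Cat_\omega^\inc \to \Psh(\square)$ is fully faithful. Next I would exploit the general principle: if $\calC \subseteq \calC' \subseteq \calD$ are full subcategories and $\calC'$ generates $\calC$ under colimits (more precisely, if every object of $\calC$ is a colimit of a diagram in $\calC'$ in a way compatible with mapping out — e.g. if $\calC' \subseteq \calD$ is dense and $\calC \subseteq \calD$), then density of $\calC' \subseteq \calD$ passes along. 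But here the subtlety runs the other way: I have a \emph{smaller} target $\Cat_\omega^\inc$ inside a \emph{bigger} target $\Cat_N^\inc$, and I want density in the bigger one. The relevant fact is rather that density of a functor $A \to \calD$ is inherited by $A \to \calD'$ whenever $\calD$ is a full subcategory of $\calD'$ \emph{and} $A \to \calD'$ still lands such that the nerve detects the relevant maps — this is not automatic.

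The honest route, and the one I would actually carry out, is to prove density of $\square \to \Cat_N^\inc$ the same way \cref{lem:theta-dense} and Corollary \ref{cor:dense} proceed: show that $\Cat_N^\inc$ is a localization of $\Psh(\Theta)$ (it is, by definition — at the spine inclusions and the $n$-cell collapses for $n \geq N+1$), so the nerve $\Cat_N^\inc \to \Psh(\Theta)$ is fully faithful, hence $\Theta \to \Cat_N^\inc$ is dense; then since $\Theta \subseteq \square$ (indeed $\Theta \subseteq \widetilde\square$ by Corollary \ref{thm:mainthm}) sits inside $\Cat_N^\inc$ as a full subcategory, the larger full subcategory $\square$ is also dense by the observation quoted in the introduction (``if $\calC \subseteq \calD$ is dense and $\calC \subseteq \calC' \subseteq \calD$, then $\calC' \subseteq \calD$ is dense''), and density is unaffected by idempotent completion so $\widetilde\square$ is dense too. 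The case $N = \omega$ is Corollary \ref{cor:dense} itself. The phrase ``which localize all $n$-morphisms for $n \geq N+1$'' in the statement is exactly the assertion that the composite $\square \to \Cat_N^{(\inc)}$ agrees with the evident truncation of $\square \to \Cat_\omega^{(\inc)}$; this I would verify by noting the truncation functor $\Cat_\omega^{(\inc)} \to \Cat_N^{(\inc)}$ is a left adjoint (a further localization) sending each cube to itself, which follows from \cref{lem:susp-univ}-style inspection of the cubes' cells or simply from the defining universal properties. I expect the main obstacle to be purely expository: making precise, with the right citation to \cref{rmk:defs} and to the definition of $\Cat_N^\inc$ as a localization of $\Psh(\Theta_N)$ versus $\Psh(\Theta)$, that the two descriptions of the functor $\square \to \Cat_N^{(\inc)}$ coincide — there is a minor point that $\Theta_N \subsetneq \Theta$ when $N < \omega$, so one must either work with $\Cat_N^\inc$ as a localization of $\Psh(\Theta)$ directly (localizing away the high cells) or check that restriction along $\Theta_N \hookrightarrow \Theta$ plays well with the cube inclusions.
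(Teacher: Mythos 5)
Your ``honest route'' has a genuine gap at its last step, and the route you abandon in your first paragraph is in fact the paper's proof --- you abandon it because you have the key containment backwards. You write $\Cat_\omega^\inc \subseteq \Cat_N^\inc \subseteq \Psh(\Theta)$, but the inclusions go the other way: $\Cat_N^\inc$ is the \emph{further} localization (more maps inverted), hence the \emph{smaller} reflective subcategory, and conceptually an $(\infty,N)$-category is a special $(\infty,\omega)$-category, so $\Cat_N^{(\inc)} \subseteq \Cat_\omega^{(\inc)}$. The paper's entire proof is then: restrict the fully faithful nerve $\Cat_\omega^{(\inc)} \to \Psh(\square)$ of \cref{cor:dense} along the full inclusion $\Cat_N^{(\inc)} \subseteq \Cat_\omega^{(\inc)}$; the composite is still fully faithful, and by the adjunction between this inclusion and the localization $L_N$ it computes $\Map_{\Cat_N^{(\inc)}}(L_N\square^\bullet,-)$, i.e.\ it \emph{is} the nerve of the functor $\square \to \Cat_N^{(\inc)}$ in the statement. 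Nothing more is needed.

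The gap in your alternative argument is this: having shown that $\Theta \to \Cat_N^\inc$ is dense (which is fine, modulo the $\Psh(\Theta_N)$ versus $\Psh(\Theta)$ bookkeeping you flag), you pass to $\square$ by citing the principle ``if $\calC \subseteq \calD$ is dense and $\calC \subseteq \calC' \subseteq \calD$, then $\calC' \subseteq \calD$ is dense.'' That principle concerns \emph{full subcategory} inclusions into $\calD$, and for $N < \omega$ neither $\Theta \to \Cat_N^\inc$ nor $\square \to \Cat_N^\inc$ is fully faithful --- the Warning immediately following this corollary in the paper says exactly this, so your assertion that $\widetilde\square$ ``sits inside $\Cat_N^\inc$ as a full subcategory'' is false. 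The statement you would actually need --- if $G \circ u$ is dense then $G$ is dense --- fails in general: take $u : \{0\} \hookrightarrow [1]$ and $G : [1] \to \Set$ sending $0$ to a singleton and $1$ to a two-element set; then $G \circ u$ is dense in $\Set$, but the $G$-nerve $X \mapsto (X \times X \to X)$ is not full. So this step cannot be repaired by a formal argument of that shape; one must exploit the reflective inclusion $\Cat_N^{(\inc)} \subseteq \Cat_\omega^{(\inc)}$ as above. (A smaller quibble: $\Theta \not\subseteq \square$, only $\Theta \subseteq \widetilde\square$, as you note parenthetically, so one also needs that density is unaffected by idempotent completion, as in \cref{cor:dense}.)
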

\begin{proof}
This follows from \cref{cor:dense} because $\Cat_N^\inc \subseteq \Cat_\omega^\inc$ and $\Cat_N \subseteq \Cat_\omega$ are full subcategory inclusions.
\end{proof}

\begin{warning}
When $N < \omega$, the dense functors of \cref{cor:dense2} are not fully faithful. It is not clear how to improve to a fully faithful functor (except perhaps by restricting back to $\Theta_N$). On the one hand, the nature of the retractions constructed in \cref{sec:wedge} is such that we do not have a bound on the dimension of the cubes necessary to contain all of the objects of $\Theta_N$ as retracts. And on the other hand, it is not clear whether the localization of $\square$ at the $n$-morphisms for $n \geq N+1$ results in a collection of strict $\omega$-categories.
\end{warning}

\begin{rmk}\label{rmk:defs}
We have defined $\Cat_\omega^\inc$ to be the localization of $\Psh(\Theta)$ at the spine inclusions. This agrees with the model-independent definition $\Cat_\omega^\inc = \varprojlim_N \Cat_N^\inc$. One way to see this is to observe that this limit of $(\infty,1)$-categories is equally the colimit $\varinjlim^{\Pr^L}_N \Cat_N^\inc$ in $\Pr^L$, where the colimit is along the inclusion functor (left adjoint to the forgetful functor). Moreover, if we write $S_{< N}$ for the set of spine inclusions of dimension $\leq N$, then by definition we have $\Cat_N^\inc = \Psh(\Theta_N) \cup_{\Psh(S_{< N+1} \times [1])}^{\Pr^L} \Psh(S_{< N+1})$, where the pushout is taken in $\Pr^L$. Therefore, 
\begin{align*}
    \varprojlim \Cat_N^\inc
    &= {\varinjlim}^{\Pr^L} \Cat_N^\inc \\
    &= {\varinjlim}^{\Pr^L} (\Psh(\Theta_N) \cup_{\Psh(S_{< N+1} \times [1])}^{\Pr^L} \Psh(S_{< N+1})) \\
    &= ({\varinjlim}^{\Pr^L}\Psh(\Theta_N)) \cup_{{\varinjlim}^{\Pr^L} \Psh(S_{< N+1} \times [1])}^{\Pr^L} {\varinjlim}^{\Pr^L} \Psh(S_{< N+1}) \\
    &= \Psh(\Theta) \cup^{\Pr^L}_{\Psh(S_{< \omega} \times [1])} \Psh(S_{< \omega}) \\
    &= \Cat_\omega^\inc
\end{align*}
The argument that $\Cat_\omega = \varprojlim \Cat_N$ is similar.
\end{rmk}

\section{Wedge sums}\label{sec:wedge}
The goal of this section is to prove \cref{thm:wedge}. The method is inductive in nature and unfolds over the course of this entire section, so let us give an overview; the following argument is recapitulated in the formal proof at the end of the section. We wish to construct section/retraction pairs
\[
\begin{tikzcd}
\iota_{m,n}: \square^m \vee \square^n \ar[r,shift left] & \square^m \otimes^\lax \square^n : \rho_{m,n} \ar[l, shift left]
\end{tikzcd}
\]

The section $\iota_{m,n}$ is straightforward to define (\cref{def:iota}): it is the map which puts $\square^m$ on the ``bottom" and $\square^n$ on the ``right" (where we visualize the first factor of the tensor product as the $x$-axis coordinate and the second factor as the $y$-axis coordinate), forming a backwards ``$L$" shape. To define the retraction $\rho_{m,n}$, we turn to induction:
Suppose that we have constructed $\rho_{m,n}$, and let us now construct $\rho_{m,n+1}$. Tensoring with $\square^1$, we have a retraction 
\[
\begin{tikzcd}
\iota_{m,n} \otimes^\lax \id_{\square^1} : (\square^m \otimes^\lax \square^1) \cup_{\square^1} (\square^n \otimes^\lax \square^1) \ar[r, shift left] & \square^m \otimes^\lax \square^n \otimes^\lax \square^1 : \rho_{m,n} \otimes^\lax \id_{\square^1} \ar[l, shift left]
\end{tikzcd}
\] (cf. \cref{rmk:reduce-wedge}). So we reduce to showing that there is a retraction pair
\[
\begin{tikzcd}
\eta_{m,n} : \square^m \vee \square^{n+1} = \square^m \cup_{\square^0} \square^{n+1} \ar[r,shift left] & (\square^m \otimes^\lax \square^1) \cup_{\square^1} (\square^n \otimes^\lax \square^1) : \zeta_{m,n} \ar[l, shift left]
\end{tikzcd}
\]

Again, the ``section" map $\eta_{m,n}$ is written down directly in 
\cref{lem:eta} 
(and shown to fit into a factorization $\iota_{m,n+1} = (\iota_{m,n} \otimes^\lax \id_{\square^1})\eta_{m,n}$).
The construction of the retract $\zeta_{m,n}$ (\ref{eqn:def-zeta}) uses the map $\rho_{m,1}$, which exists by induction.

We begin by defining the section maps $\iota_{m,n}$.

\begin{lem}\label{def:iota}
Let $m,n \in \nats$. There is a well-defined bipointed map
\begin{align}
    \iota_{m,n} &: \square^m \vee \square^n \to \square^m \otimes^\lax \square^n \nonumber \\
    \iota_{m,n} &= (\id_{\square^m} \otimes^\lax \const_{\vec 0}) \cup_{\vec 1 \otimes^\lax \vec 0} (\const_{\vec 1} \otimes^\lax \id_{\square^n})
\end{align}
\end{lem}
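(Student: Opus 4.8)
The plan is to construct $\iota_{m,n}$ through the universal property of the pushout defining the wedge sum, so that the entire content reduces to checking a single compatibility of maps at the gluing object, plus the bipointing condition.

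First I would spell out the two legs. Using that $\square^0$ is the monoidal unit for $\otimes^\lax$, the map $\id_{\square^m} \otimes^\lax \const_{\vec 0}$ is a map $\square^m \cong \square^m \otimes^\lax \square^0 \to \square^m \otimes^\lax \square^n$ — concretely the ``bottom face'' inclusion $x \mapsto (x,\vec 0)$ — and likewise $\const_{\vec 1} \otimes^\lax \id_{\square^n}$ is a map $\square^n \cong \square^0 \otimes^\lax \square^n \to \square^m \otimes^\lax \square^n$, the ``right face'' inclusion $y \mapsto (\vec 1, y)$. Together these trace out the backwards-``$L$'' shape from the overview. Since wedge sums of cubes exist in $\sCat_{N,\ast,\ast}$ (\cref{eg:wedge-theta}), $\square^m \vee \square^n = \square^m \cup_{\square^0} \square^n$ is a well-defined bipointed object, where the pushout identifies the second point $\vec 1$ of $\square^m$ with the first point $\vec 0$ of $\square^n$.

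Next I would verify the gluing condition: the bottom face sends $\vec 1 \in \square^m$ to $(\vec 1, \vec 0)$, and the right face sends $\vec 0 \in \square^n$ to $(\vec 1, \vec 0)$, so the two legs agree on the common point $\vec 1 \otimes^\lax \vec 0$ — this is exactly what the subscript in the displayed formula records. The universal property of the pushout then produces the unique map $\iota_{m,n} : \square^m \vee \square^n \to \square^m \otimes^\lax \square^n$ restricting to the two faces. Finally I would check bipointedness: the first point of $\square^m \vee \square^n$ is $\vec 0 \in \square^m$, sent by the bottom face to $(\vec 0,\vec 0)$, and the second point is $\vec 1 \in \square^n$, sent by the right face to $(\vec 1,\vec 1)$; by \cref{rmk:tensor-bipoint} the diagonal bipointing on $\square^m \otimes^\lax \square^n$ picks out precisely $(\vec 0,\vec 0)$ and $(\vec 1,\vec 1)$, so $\iota_{m,n}$ preserves the bipointing.

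There is no genuine obstacle here; the lemma is a definitional unwinding. The only thing requiring care is the bookkeeping of which objects of the cubes are being glued in the wedge sum and how the diagonal bipointing on the Gray tensor product is defined, both of which are fixed by the conventions set up in \cref{sec:theta}.
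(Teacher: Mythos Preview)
Your proposal is correct and is essentially the same argument as the paper's: the paper packages the gluing compatibility and bipointedness checks into a single commutative diagram of spans and declares it commutes by inspection, while you unwind the same three verifications (agreement at $\vec 1 \otimes^\lax \vec 0$, first point $\mapsto (\vec 0,\vec 0)$, second point $\mapsto (\vec 1,\vec 1)$) in prose. There is no substantive difference.
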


\begin{proof}
The claim is that the following diagram commutes:
\begin{equation*}
    \begin{tikzcd}[row sep = 30]
        \square^0 \ar[r,"\vec 0"] \ar[d,equal] &
        \square^m \ar[d,"\id_{\square^m} \otimes^\lax \const_{\vec 0}"{description}] & 
        \square^0 \ar[l,"\vec 1" above] \ar[d,"\vec 1 \otimes^\lax \vec 0" description] \ar[r,"\vec 0" above] &
        \square^n \ar[d,"\const_{\vec 1} \otimes^\lax \id_{\square^n}"{description}] &
        \square^0 \ar[l,"\vec 1"] \ar[d,equal]
        \\
        \square^0 \ar[r,"\vec 0"] &
        \square^m \otimes^\lax \square^n & 
        \square^m \otimes^\lax \square^n \ar[r,equal] \ar[l,equal] & 
        \square^m \otimes^\lax \square^n &
        \square^0 \ar[l,"\vec 1"]
    \end{tikzcd}
\end{equation*}
This is true by inspection.
\end{proof}

\begin{lem}\label{lem:base-wedge}
Suppose that $m=0$ or $n=0$ or $m=n=1$. Then $\iota_{m,n}$ admits a bipointed retraction $\rho_{m,n} : \square^m \otimes^\lax \square^n \to \square^m \vee \square^n$.
\end{lem}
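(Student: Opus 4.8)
The plan is to handle the three cases in the hypothesis separately, since each is essentially trivial once the relevant identifications are made. The common thread is that in each case the wedge sum $\square^m \vee \square^n$ is \emph{already} a tensor product of cubes (or very nearly so), so the retraction can be built from identity maps and the canonical collapse maps $\square^1 \to \square^0$.

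First consider $n = 0$ (the case $m = 0$ is symmetric). Here $\square^0$ is terminal, so the second point $\square^0 \to \square^0$ is the identity, and the wedge sum $\square^m \vee \square^0 = \square^m \cup_{\square^0} \square^0$ is just $\square^m$ itself, with bipointing $(\vec 0, \vec 1)$. On the other side, $\square^m \otimes^\lax \square^0 = \square^m$ as well (the Gray tensor product with the monoidal unit), and under \cref{rmk:tensor-bipoint} the diagonal bipointing of $\square^m \otimes^\lax \square^0$ is precisely $(\vec 0, \vec 1)$. So $\iota_{m,0}$ is (isomorphic to) the identity on $\square^m$, and we take $\rho_{m,0} = \id_{\square^m}$; it is a bipointed retraction by inspection. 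The same argument with the roles reversed handles $m = 0$, using $\square^0 \otimes^\lax \square^n = \square^n$.

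The only case with genuine content is $m = n = 1$. Here $\square^1 \vee \square^1$ is the ``walking composable pair of arrows'' $[2] = \{0 \to 1 \to 2\}$, bipointed at $0$ and $2$, while $\square^1 \otimes^\lax \square^1 = \square^2$ is the commutative square with a $2$-cell filling it, bipointed diagonally at $\vec 0 = (0,0)$ and $\vec 1 = (1,1)$. The section $\iota_{1,1}$ sends the composable pair to the bottom-then-right path $(0,0) \to (1,0) \to (1,1)$. To define the retraction $\rho_{1,1} : \square^2 \to [2]$ I would use the universal property of $\square^2 = \square^1 \otimes^\lax \square^1$ as a tensor product, or equivalently describe it by its action on the four objects and the generating $1$- and $2$-cells: send $(0,0) \mapsto 0$, $(1,0) \mapsto 1$, $(0,1) \mapsto 1$, $(1,1) \mapsto 2$; send the two bottom/right edges to the two generators of $[2]$, the two top/left edges also to those generators, and the filling $2$-cell to the identity $2$-cell on the composite $0 \to 2$ (this is forced, since $[2]$ is a $1$-category). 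One checks this is a well-defined strict functor — the only thing to verify is that the source and target $1$-cells of the $2$-cell both go to the composite $0 \to 2$, which they do — and that it preserves the bipointing, which is immediate from $(0,0) \mapsto 0$, $(1,1) \mapsto 2$. Finally $\rho_{1,1} \circ \iota_{1,1}$ sends $0 \to 1 \to 2$ along $(0,0) \to (1,0) \to (1,1) \mapsto 0 \to 1 \to 2$, so it is the identity, as required.

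I do not expect any real obstacle here; the $m = n = 1$ computation is the only place one must actually write down a functor out of $\square^2$, and the only subtlety is making sure the $2$-cell has nowhere to go but an identity. The more serious work — propagating these base cases through the tensoring-and-collapsing induction — is deferred to the rest of the section and to \cref{thm:wedge} proper.
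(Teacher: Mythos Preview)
Your proof is correct and follows essentially the same approach as the paper: in the degenerate cases $\iota_{m,n}$ is the identity, and for $m=n=1$ the retraction sends the one object $(0,1)$ not in the image of $\iota_{1,1}$ to the join point and collapses the $2$-cell to an identity. Your write-up is simply more explicit about the object-level description of $\rho_{1,1}$.
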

\begin{proof}
When $m=0$ or $n=0$, $\iota_{m,n}$ is the identity, so trivially $\rho_{m,n}$ exists and is likewise the identity. When $m=n=1$, the claim is that $\iota_{1,1} : \square^1 \vee \square^1 \to \square^2$ admits a bipointed retract. This is clear: for instance, we may define $\rho_{1,1}$ to send the point not in $\iota_{1,1}(\square^1 \vee \square^1)$ to the join point. Then $\rho$ is clearly well-defined on 1-skeleta, and it simply carries the nonidentity 2-cell to the identity on a (composite) 1-cell.
\end{proof}

\begin{rmk}\label{rmk:reduce-wedge}
Assume (e.g. for induction as in the proof of \cref{thm:wedge} below -- once we have proven \cref{thm:wedge} we will know that this assumption is always true) that $\iota_{m,n} : \square^m \vee \square^n \to \square^m \otimes^\lax \square^n$ admits a bipointed retraction $\rho_{m,n}$.

Then (cf. \cref{rmk:tensor-bipoint}) we have a canonical bipointing for $(\square^m \otimes^\lax \square^1) \cup_{\square^1} (\square^n \otimes^\lax \square^1)$, and the pair
\begin{equation}
    \iota_{m,n} \otimes^\lax \id_{\square^1} : (\square^m \otimes^\lax \square^1) \cup_{\square^1} (\square^n \otimes^\lax \square^1) {}^\to_\leftarrow  \square^m \otimes^\lax \square^n \otimes^\lax \square^1 : \rho_{m,n} \otimes^\lax \id_{\square^1}
\end{equation}
is a bipointed retraction diagram.
\end{rmk}

\begin{lem}\label{lem:eta}
Let $m,n \in \nats$. There is a well-defined bipointed map
\begin{align}
    \eta_{m,n} &: \square^m \vee \square^{n+1} \to (\square^m \otimes^\lax \square^1) \cup_{\square^1} (\square^n \otimes^\lax \square^1) \nonumber \\
    \eta_{m,n} &= (\id_{\square^m} \otimes^\lax 0) \cup_{0} (\id_{\square^n \otimes^\lax \square^1})
\end{align}

Moreover, we have a factorization
\begin{equation}
    \iota_{m,n+1} = (\iota_{m,n} \otimes^\lax \id_{\square^1}) \eta_{m,n}
\end{equation}
\end{lem}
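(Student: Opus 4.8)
Both assertions are formal consequences of the universal property of pushouts together with two identifications that I will record at the outset. First, $\square^{n+1}=\square^n\otimes^\lax\square^1$ by definition, and under the identification of its object set with $\{0,1\}^n\times\{0,1\}$ the first (natural) point of $\square^{n+1}$ is $\vec 0_{n+1}=(\vec 0_n,0)$, which is also the diagonal first point of $\square^n\otimes^\lax\square^1$. Second, since $\otimes^\lax$ is monoidal biclosed (so $-\otimes^\lax\square^1$ preserves colimits) and $\square^0\otimes^\lax\square^1=\square^1$, the target object $(\square^m\otimes^\lax\square^1)\cup_{\square^1}(\square^n\otimes^\lax\square^1)$ is canonically $(\square^m\vee\square^n)\otimes^\lax\square^1$, with the pushout formed along $\vec 1_m\otimes^\lax\id_{\square^1}$ and $\vec 0_n\otimes^\lax\id_{\square^1}$; this is exactly the situation of \cref{rmk:reduce-wedge}, which also supplies the bipointing.

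To see that $\eta_{m,n}$ is well defined as a map out of the pushout $\square^m\vee\square^{n+1}=\square^m\cup_{\square^0}\square^{n+1}$, I will check the cocone condition: the gluing point $\vec 1_m\in\square^m$ and the first point $(\vec 0_n,0)\in\square^{n+1}$ must have the same image. Under $\id_{\square^m}\otimes^\lax 0$ the object $\vec 1_m$ goes to $(\vec 1_m,0)\in\square^m\otimes^\lax\square^1$, and in the pushout $(\square^m\otimes^\lax\square^1)\cup_{\square^1}(\square^n\otimes^\lax\square^1)$ this is precisely identified with $(\vec 0_n,0)\in\square^n\otimes^\lax\square^1$, which is the image of the first point of $\square^{n+1}$ under $\id_{\square^n\otimes^\lax\square^1}$. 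Hence $\eta_{m,n}$ exists. Bipointedness is then read off: the first point $\vec 0_m$ of $\square^m\vee\square^{n+1}$ maps to $(\vec 0_m,0)$, the diagonal first point of the target, and the second point $(\vec 1_n,1)$ of $\square^{n+1}$ maps to $(\vec 1_n,1)\in\square^n\otimes^\lax\square^1$, the diagonal second point of the target.

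For the factorization $\iota_{m,n+1}=(\iota_{m,n}\otimes^\lax\id_{\square^1})\,\eta_{m,n}$, both sides are maps out of the pushout $\square^m\cup_{\square^0}\square^{n+1}$, so it suffices to compare the two composites after restricting along the two structure maps. On the $\square^m$ summand, $\iota_{m,n+1}$ restricts to $\id_{\square^m}\otimes^\lax\const_{\vec 0_{n+1}}$, i.e. $x\mapsto(x,\vec 0_n,0)$; on the other side, $\eta_{m,n}$ restricts to $\id_{\square^m}\otimes^\lax 0$, i.e. $x\mapsto(x,0)$, and then $\iota_{m,n}\otimes^\lax\id_{\square^1}$ restricted to the $\square^m\otimes^\lax\square^1$-leg is $(\id_{\square^m}\otimes^\lax\const_{\vec 0_n})\otimes^\lax\id_{\square^1}$, giving $(x,\vec 0_n,0)$. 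On the $\square^{n+1}=\square^n\otimes^\lax\square^1$ summand, $\iota_{m,n+1}$ restricts to $\const_{\vec 1_m}\otimes^\lax\id_{\square^{n+1}}$, i.e. $(s,t)\mapsto(\vec 1_m,s,t)$; on the other side $\eta_{m,n}$ restricts to the identity into the $\square^n\otimes^\lax\square^1$-leg, and $\iota_{m,n}\otimes^\lax\id_{\square^1}$ restricted there is $(\const_{\vec 1_m}\otimes^\lax\id_{\square^n})\otimes^\lax\id_{\square^1}$, again $(s,t)\mapsto(\vec 1_m,s,t)$. The two restrictions agree on both summands, so the maps coincide.

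I do not expect any genuine obstacle here: the content is entirely the bookkeeping of which vertices are glued to which, plus the (standard) fact that $-\otimes^\lax\square^1$ commutes with the pushout defining the wedge. The only point requiring a little care is consistently tracking the "diagonal" versus "natural" bipointings and verifying that they coincide on the cubes in play, so that the identification $(\square^m\otimes^\lax\square^1)\cup_{\square^1}(\square^n\otimes^\lax\square^1)=(\square^m\vee\square^n)\otimes^\lax\square^1$ is compatible with the bipointings of \cref{rmk:reduce-wedge}; everything else is true by inspection.
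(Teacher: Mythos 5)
Your proposal is correct and is essentially the paper's own argument: the paper organizes the same verifications as a map of spans (its diagram \eqref{eqn:eta}) and passes to pushouts, whereas you check the cocone condition and the agreement of the two composites leg-by-leg on the pushout $\square^m\cup_{\square^0}\square^{n+1}$, which amounts to the same bookkeeping. The identification $(\square^m\otimes^\lax\square^1)\cup_{\square^1}(\square^n\otimes^\lax\square^1)\cong(\square^m\vee\square^n)\otimes^\lax\square^1$ via biclosedness of $\otimes^\lax$ and the compatibility of the diagonal bipointing are likewise exactly what the paper uses via \cref{rmk:reduce-wedge}.
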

\begin{proof}
Consider the following maps of spans:
\begin{equation}\label{eqn:eta}
    \begin{tikzcd}
        \square^m \ar[d,"\id_{\square^m} \otimes^\lax 0" swap] &
        \square^0 \ar[l,"\vec 1" above] \ar[r,"\vec 0"] \ar[d,"0"] &
        \square^{n+1} \ar[d,equal] \\
        \square^m \otimes^\lax \square^1 \ar[d,"\id_{\square^m} \otimes^\lax \vec 0 \otimes^\lax \id_{\square^1}" swap] &
        \square^1 \ar[r,"\vec 0 \otimes^\lax \id_{\square^1}" above] \ar[l,"\vec 1 \otimes^\lax \id_{\square^1}" above] \ar[d,"\vec 1 \otimes^\lax \vec 0 \otimes^\lax \id_{\square^1}"] &
        \square^{n} \otimes^\lax \square^1 \ar[d,"\vec 1 \otimes^\lax \id_{\square^n} \otimes^\lax \id_{\square^1}" ] \\
        \square^m \otimes^\lax \square^{n} \otimes^\lax \square^1 & 
        \square^m \otimes^\lax \square^{n} \otimes^\lax \square^1 \ar[l,equal] \ar[r,equal] & 
        \square^m \otimes^\lax \square^{n} \otimes^\lax \square^1
    \end{tikzcd}
\end{equation}
The pushout of the upper map of spans is the map $\eta_{m,n}$, so it is well-defined (and bipointedness is easy to see). The pushout of the lower map of spans is $\iota_{m,n} \otimes^\lax \id_{\square^1}$, and the pushout of the composite map of spans is $\iota_{m,n+1}$. This gives the desired factorization.
\end{proof}

\begin{proof}[Proof of \cref{thm:wedge}]
We wish to show that $\iota_{m,l}$ (as defined in \cref{def:iota}) admits a bipointed retraction $\rho_{m,l}$. We proceed by induction on $m+l$. When $m+l \leq 2$, this follows from \cref{lem:base-wedge}. Suppose now that $m+l \geq 3$.
Without loss of generality we have $l\geq 2$ (the argument in the case where $l=1$ and $m\geq 2$ is similar to the following, but uses the dual of \cref{lem:eta}. Let $n=l-1$, so that we are trying to construct $\rho_{m,n+1}$. By induction, $\iota_{m,n}$ has a bipointed retract $\rho_{m,n}$, so that (cf. \cref{rmk:reduce-wedge}) $\iota_{m,n} \otimes^\lax \id_{\square^1}$ has a bipointed retract $\rho_{m,n} \otimes^\lax \id_{\square^1}$. By \cref{lem:eta}, we have a factorization $\iota_{m,n+1} = (\iota_{m,n} \otimes^\lax \id_{\square^1}) \eta_{m,n}$. Therefore, it will suffice to show that the map $\eta_{m,n}$ admits a bipointed retract $\zeta_{m,n}$ like so:
\begin{equation}
    \eta_{m,n} : \square^m \vee \square^{n+1} {}^\to_\leftarrow (\square^m \otimes^\lax \square^1) \cup_{\square^1} (\square^n \otimes^\lax \square^1) : \zeta_{m,n}
\end{equation}
To this end, we contemplate the following commutative diagram:
\begin{equation}\label{eqn:def-zeta}
\begin{tikzcd}
    \square^m \ar[d,"\id_{\square^m \otimes^\lax 0}" swap] 
    & 
    \square^0 \ar[l,"\vec 1" swap] \ar[d,"0"] \ar[r,"\vec 0"] &
    \square^{n+1} \ar[d,equal] 
    \\
    \square^{m}\otimes^\lax \square^1 \ar[d,"\rho_{m,1}" swap] & 
    \square^1 \ar[l, "\vec 1 \otimes^\lax \id_{\square^1}" swap] \ar[r,"\vec 0 \otimes^\lax \id_{\square^1}"] \ar[d] &
    \square^{n+1} \ar[dd] \\
    \square^m \vee \square^1 \ar[d, "\id_{\square^m} \vee (\vec 0 \otimes^\lax \id_{\square^1})" swap] & 
    \square^m \vee \square^1 \ar[l,equal] \ar[d, "\id_{\square^m} \vee (\vec 0 \otimes^\lax \id_{\square^1})"] \ar[ul,"\iota_{m,1}" swap] & \\
    \square^m \vee \square^{n+1} & \square^m \vee \square^{n+1} \ar[l,equal] \ar[r,equal] & \square^m \vee \square^{n+1}
\end{tikzcd}
\end{equation}
(Note that we have used the inductive hypothesis to invoke the existence of the retract $\rho_{m,1}$ of $\iota_{m,1}$.) From the first row to the second row we have a map of spans, and $\eta_{m,n}$ is the map which results upon passing to pushouts. From the second to the bottom row we have a map of spans; $\zeta_{m,n}$ is defined to be the map resulting by passing to pushouts. From the top row to the bottom row we again have a composite map of spans, and one checks via diagram chase that the induced map of pushouts is the identity, so that $\zeta_{m,n}$ is a retract of $\eta_{m,n}$ as desired. Bipointedness is straightforward to verify.
\end{proof}

\section{Suspensions}\label{sec:susp}
The goal of this section is to prove \cref{thm:susp}, i.e. to construct a bipointed retraction pair 
\begin{equation}
\begin{tikzcd}
    \phi_n : \Sigma \square^n \ar[r, shift left] & \square^n \otimes^\lax \square^1 : \psi_n \ar[l, shift left]
\end{tikzcd}
\end{equation}
The argument will be by induction on $n$, and similar in flavor to the proof of \cref{thm:wedge} in the previous section. Let us give an overview of the argument, which is recapitulated in the proof at the very end of the section. In the case $n=0$, the statement is obvious -- we take $\phi_0 = \psi_0 = \id_{\square^1}$. The case $n=1$ claims that the 2-globe $\Sigma \square^1$ is a bipointed retract of $\square^2$. This is also easy to verify by hand, but defining the ``section" map $\phi_1$ is already a bit fiddly, in that one needs to know something about composites of 1-morphisms in $\square^2$. By contrast, the definition of the ``retraction" map $\psi_1$ is more straightforward. Thus we begin (\cref{obs:2}) by defining $\psi_n : \square^{n+1} \to \Sigma \square^n$ for all $n$ using the pushout formula $\Sigma \square^n = \square^{n+1} \cup_{\partial \square^1 \otimes^\lax \square^n} \partial \square^1$ of \cref{lem:susp-po} -- $\psi_n$ is simply defined to be the canonical quotient map appearing in the pushout square. It remains to inductively construct sections $\phi_n$ of the maps $\psi_n$. Suppose for induction that we have constructed $\phi_{n}$, and let us now construct $\phi_{n+1}$. Tensoring with $\square^1$, we have a retraction 
\[
\begin{tikzcd}
\phi_{n} \otimes^\lax \id_{\square^1} : \Sigma \square^n \otimes^\lax \square^1 \ar[r, shift left] & \square^n \otimes^\lax \square^1 \otimes^\lax \square^1 : \psi_{n} \otimes^\lax \id_{\square^1} \ar[l, shift left]
\end{tikzcd}
\]
(cf. \cref{rmk:reduce-wedge}). So we reduce to showing that there is a retraction pair
\[
\begin{tikzcd}
\chi_n : \Sigma \square^{n+1} \ar[r,shift left] &  \Sigma \square^{n} \otimes^\lax \square^1 : \xi_{n} \ar[l, shift left]
\end{tikzcd}
\]
Using the pushout decomposition of \cref{lem:susp-po}, we see that (\cref{obs:1-loc}) we have an obvious choice for $\xi_n$ which localizes at two explicit 1-morphisms and nothing more. After analyzing this localization in \cref{eg:po-anal}, we see that there is a unique such section in \cref{lem:sec}.

\begin{Def}\label{obs:2}
Recall from \cref{lem:susp-po} that we have
$$\Sigma \square^n = \square^{n+1} \cup_{\partial \square^1 \otimes^\lax \square^n} \partial \square^1$$
For each $n$, we take the ``retraction" map $\psi_n: \square^{n+1} \to \Sigma \square^n$ in the statement of \cref{thm:susp} to be the canonical quotient map appearing in the above pushout square.
\end{Def}

\begin{obs}\label{obs:1-loc}
Note that we have
\begin{align*}
    \Sigma \square^{n+1} 
    &= \square^{n+2} \cup_{\partial \square^{1} \otimes^\lax \square^{n+1}} \partial \square^1 \\
    &= \square^{n+2} \cup_{\partial \square^{1} \otimes^\lax \square^{n+1}} (\partial \square^1 \otimes^\lax \square^1) \cup_{\partial \square^1 \otimes^\lax \square^1} \partial \square^1 \\
    &= ((\Sigma \square^n) \otimes^\lax \square^1)\cup_{\partial \square^1 \otimes^\lax \square^1} \partial \square^1
\end{align*}

That is, we have a canonical map $\xi_n : (\Sigma \square^n) \otimes^\lax \square^1 \to \Sigma \square^{n+1}$ which is given by collapsing two 1-morphisms to points. Moreover, we have a bipointed factorization $\psi_{n+1} = \xi_n(\psi_n \otimes^\lax \id_{\square^1})$.
\end{obs}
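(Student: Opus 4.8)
The plan is to deduce all three displayed equalities from \cref{lem:susp-po} together with the fact that $\otimes^\lax$ preserves colimits in each variable separately (it is biclosed, cf.\ \cite{abs}), and then to read off $\xi_n$ and the factorization of $\psi_{n+1}$ by means of the pasting law for pushouts. None of this requires any genuinely new input; the work is entirely bookkeeping of structure maps.

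First I would obtain the first equality by applying \cref{lem:susp-po} with $C=\square^{n+1}$, using the identification $\square^1\otimes^\lax\square^{n+1}=\square^{n+2}$. For the third equality I would instead apply \cref{lem:susp-po} with $C=\square^n$ and tensor the resulting pushout square on the right with $\square^1$; because $(-)\otimes^\lax\square^1$ preserves pushouts, this gives $(\Sigma\square^n)\otimes^\lax\square^1=\square^{n+2}\cup_{\partial\square^1\otimes^\lax\square^{n+1}}(\partial\square^1\otimes^\lax\square^1)$, where I have used $\square^1\otimes^\lax\square^n\otimes^\lax\square^1=\square^{n+2}$ and $\partial\square^1\otimes^\lax\square^n\otimes^\lax\square^1=\partial\square^1\otimes^\lax\square^{n+1}$ (both because $\square^n\otimes^\lax\square^1=\square^{n+1}$). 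The middle equality is then precisely the instance of the pushout pasting law that glues these two squares along $\partial\square^1\otimes^\lax\square^1$. The one point needing care is that the structure map $\partial\square^1\otimes^\lax\square^{n+1}\to\partial\square^1$ of the first square agrees with the composite $\partial\square^1\otimes^\lax\square^{n+1}=\partial\square^1\otimes^\lax\square^n\otimes^\lax\square^1\xrightarrow{\id\otimes^\lax\,!_{\square^n}\otimes^\lax\id}\partial\square^1\otimes^\lax\square^1\xrightarrow{\id\otimes^\lax\,!_{\square^1}}\partial\square^1$; this holds by uniqueness of maps to the terminal object $\square^0$, and with it the pasting law applies verbatim.

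It then remains to extract the last two assertions. I would define $\xi_n$ to be the second leg $(\Sigma\square^n)\otimes^\lax\square^1\to\Sigma\square^{n+1}$ of the pasted pushout; since $\partial\square^1\otimes^\lax\square^1$ is two copies of $\square^1$ while $\partial\square^1$ is two points, this leg collapses the two $1$-morphisms $\{0\}\otimes^\lax\square^1$ and $\{1\}\otimes^\lax\square^1$ to points and does nothing else, which is the asserted description. The factorization $\psi_{n+1}=\xi_n(\psi_n\otimes^\lax\id_{\square^1})$ is then just the statement that the quotient leg of a two-step pushout is the composite of the two quotient legs: $\psi_n\otimes^\lax\id_{\square^1}\colon\square^{n+2}\to(\Sigma\square^n)\otimes^\lax\square^1$ is the functor $(-)\otimes^\lax\square^1$ applied to the quotient map $\psi_n$ of \cref{obs:2}, hence is the first leg, $\xi_n$ is the second, and their composite is the quotient map $\square^{n+2}\to\Sigma\square^{n+1}$, which is $\psi_{n+1}$ by definition. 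Bipointedness of every map in sight is immediate, since the canonical bipointing of a suspension is carried by the objects $0,1$, corresponding throughout to $\vec 0,\vec 1$ in the cubes and to the diagonal bipointings on tensor products (cf.\ \cref{rmk:tensor-bipoint}). The main (and really only) obstacle is the structure-map compatibility noted above that licenses the use of the pasting law.
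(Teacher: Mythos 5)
Your proposal is correct and follows essentially the same route as the paper, which justifies the observation by exactly this chain: apply \cref{lem:susp-po} to $\square^{n+1}$, decompose the pushout in two stages via the pasting law after recognizing the first stage as $(\Sigma\square^n)\otimes^\lax\square^1$ (using that $\otimes^\lax$ preserves pushouts in each variable), and read off $\xi_n$ and the factorization $\psi_{n+1}=\xi_n(\psi_n\otimes^\lax\id_{\square^1})$ from the legs of the composite pushout. Your explicit check that the structure maps to $\partial\square^1$ are compatible (via uniqueness of maps to $\square^0$) is a detail the paper leaves implicit, and it is handled correctly.
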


For the next two lemmas, we fix $n \in \nats$ and assume the following inductive hypothesis:
\begin{equation}\label{eqn:ind}
    \text{$\Sigma \square^n$ is a retract of a cube.}
\end{equation}
(Once we have proven \cref{thm:susp}, we will know that Hypothesis \ref{eqn:ind} is always true.) The upshot of Hypothesis \ref{eqn:ind} is that by \cref{lem:stupid-lem}, it follows that for each object $x \in \{0,1\} = \Ob(\Sigma \square^n)$, we have
\begin{equation}\label{eqn:contr}
\Hom_{\Sigma \square^n \otimes^\lax \square^1}((x,0),(x,0)) = \Hom_{\Sigma \square^n \otimes^\lax \square^1}((x,0),(x,1)) = \Hom_{\Sigma \square^n \otimes^\lax \square^1}((x,1),(x,1)) = \square^0
\end{equation}

\begin{lem}\label{eg:po-anal}
(Assuming Hypothesis \ref{eqn:ind}.)  The action of $\xi_n$ on the ``long" hom-category is an isomorphism
\[\Hom_{(\Sigma \square^n) \otimes^\lax \square^1}(00,11) \xrightarrow \cong \Hom_{\Sigma (\square^{n+1})}(0,1)\]
\end{lem}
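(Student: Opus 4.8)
The plan is to compute the hom-category $\Hom_{(\Sigma\square^n)\otimes^\lax\square^1}(00,11)$ explicitly using the formula for hom-categories of a Gray tensor product, and then to check directly that the quotient map $\xi_n$ (which by \cref{obs:1-loc} collapses two $1$-morphisms to points) restricts to the identity on this particular hom-category. Concretely, recall that in the Gray tensor product $A\otimes^\lax B$, the hom-category $\Hom_{A\otimes^\lax B}((a,b),(a',b'))$ admits a description as a (lax/oplax) join or ``funny tensor"-like composite built from $\Hom_A(a,a')$ and $\Hom_B(b,b')$; in the case at hand, with $A = \Sigma\square^n$, $a=0$, $a'=1$ (so $\Hom_A(0,1)=\square^n$), and $B=\square^1$, $b=0$, $b'=1$ (so $\Hom_B(0,1)=\square^0$), this collapses considerably. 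I would first write down exactly which generating $1$- and higher cells of $(\Sigma\square^n)\otimes^\lax\square^1$ have source $00$ and target $11$, organizing them by which ``corner path'' through the bipointed square $\partial\square^1\otimes^\lax\square^1$ they traverse.

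Next I would identify the two $1$-morphisms that $\xi_n$ collapses. From the pushout decomposition $\Sigma\square^{n+1} = ((\Sigma\square^n)\otimes^\lax\square^1)\cup_{\partial\square^1\otimes^\lax\square^1}\partial\square^1$ in \cref{obs:1-loc}, the map $\xi_n$ is the canonical map into the pushout, and it collapses the image of $\partial\square^1\otimes^\lax\square^1$ down to $\partial\square^1$; the two nontrivial $1$-cells being killed are the two ``vertical'' arrows $(0,0)\to(0,1)$ and $(1,0)\to(1,1)$ coming from the $\square^1$-factor over the two objects of $\partial\square^1$. The key point is that neither of these two collapsed $1$-cells, nor any cell built from them, lies in $\Hom_{(\Sigma\square^n)\otimes^\lax\square^1}(00,11)$: every morphism from $00$ to $11$ must involve the nontrivial $1$-cell $0\to 1$ of $\Sigma\square^n$ (with hom $\square^n$), and the structure of the Gray tensor product forces the passage through the $\square^1$-direction to occur via cells that survive the quotient. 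This is exactly where Hypothesis \ref{eqn:ind}, via the contractibility statements \eqref{eqn:contr}, gets used: it guarantees that the ``short'' hom-categories over a fixed object $x\in\{0,1\}$ are trivial, so there is no room for the collapsed $1$-cells to contribute a nontrivial automorphism or to merge distinct cells inside the long hom-category.

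I would then assemble these observations into the statement that $\xi_n$ induces a bijection on objects and an isomorphism on hom-sets of the category $\Hom_{(\Sigma\square^n)\otimes^\lax\square^1}(00,11)$, i.e.\ an isomorphism of categories onto $\Hom_{\Sigma\square^{n+1}}(0,1)$; one should also note that by \cref{lem:susp-univ} the latter hom-category is literally $\square^{n+1}$, and by \cref{lem:susp-po} applied one dimension down it is $(\square^n\otimes^\lax\square^1)\cup_{\partial\square^1\otimes^\lax\square^1}\partial\square^1$, so the claimed isomorphism is really the identification of $\Hom_{(\Sigma\square^n)\otimes^\lax\square^1}(00,11)$ with that same pushout. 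The main obstacle I anticipate is purely bookkeeping: making the description of hom-categories of a Gray tensor product precise enough (citing \cite{ara-malt} or the interchange-cell presentation) that ``the collapsed cells don't touch the long hom-category'' becomes a genuinely checkable statement rather than a picture, and handling the $n$-fold iterated tensor structure inside $\square^n$ uniformly in $n$ rather than case-by-case.
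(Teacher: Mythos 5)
You have the right ingredients --- the pushout decomposition of \cref{obs:1-loc}, the identification of the two collapsed $1$-cells $f \colon 00 \to 01$ and $g \colon 10 \to 11$, and the contractibility statements \eqref{eqn:contr} --- but your central claim is where the argument breaks. Writing $B = (\Sigma\square^n)\otimes^\lax\square^1$, you assert that ``neither of these two collapsed $1$-cells, nor any cell built from them, lies in $\Hom_B(00,11)$,'' and you treat this as the key point. Read literally it is false: for any $1$-cell $h \colon 00 \to 10$ the composite $gh$ lies in $\Hom_B(00,11)$ and is built from $g$. Read charitably (the subcategory $\partial\square^1\otimes^\lax\square^1$ being collapsed has empty hom from $00$ to $11$) it is true but insufficient: pushouts of strict $\omega$-categories are not local in this way, and collapsing $f$ and $g$ to identities genuinely \emph{adds} morphisms to the long hom --- every cell of $\Hom_B(00,10)$ and of $\Hom_B(01,11)$, and every formal composite through the intermediate objects $01$ and $10$, becomes a morphism $0 \to 1$ in the quotient. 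The content of the lemma is precisely that all of these new morphisms are identified with cells already present in $\Hom_B(00,11)$, and that no two cells of $\Hom_B(00,11)$ get merged in the process; your proposal addresses only the second half (``no room to merge distinct cells'') and disposes of the first half via the false/insufficient claim above.

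The paper's proof fills exactly this gap. It writes $\Hom_{\Sigma\square^{n+1}}(0,1)$ explicitly as a quotient of the disjoint union of $\Hom_B(00,11)$ with the sets of formal composites through $01$ and through $10$, modulo the congruence identifying a formal pair $(f,\phi)$ or $(\psi,g)$ with its actual composite. Because \eqref{eqn:contr} makes $\Hom_B(00,01) = \{f\}$ and $\Hom_B(10,11) = \{g\}$ terminal, every formal composite has $f$ or $g$ as a factor and is therefore identified with an honest element of $\Hom_B(00,11)$; this yields a normal form (take the representative in $\Hom_B(00,11)$) and hence the claimed isomorphism. To repair your write-up, replace the ``collapsed cells don't touch the long hom'' step with this normal-form computation of the pushout hom-category; the rest of your outline (including the observation that the target is $\square^{n+1}$ via \cref{lem:susp-univ}) is consistent with the paper.
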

\begin{proof}
Write $C = \Sigma (\square^{n+1})$ and $B =  (\Sigma \square^n) \otimes^\lax \square^1$. We have 
\begin{equation}\label{eqn:meh}
C = B \cup_{\partial \square^1 \otimes^\lax \square^1} \partial \square^1
\end{equation}
by \cref{obs:1-loc}. The $(n+2)$-category $B$ has 4 objects, which we label $00,01,10,11$. There are canonical 1-morphisms $f : 00 \to 01, g : 10 \to 11$ which we collapse to obtain the two-object $(n+2)$-category $C$. We denote the two objects of $C$ by $0,1$. From the pushout description \ref{eqn:meh}, we have that
\[\Hom_C(0,1) = \Hom_B(00,11) \amalg (\Hom_B(00,01) \times \Hom_B(10,11)) \amalg (\Hom_B(00,10) \times \Hom_B(10,11)) / \sim\]
where ``$\sim$" is the congruence identifying $(f,\phi)$ with $f\phi$ and identifying $(\psi,g)$ with $\psi g$. By Hypothesis \ref{eqn:ind} and the resulting \ref{eqn:contr}, we have that $\Hom_B(00,01) = \{f\}$ and $\Hom_B(10,11) = \{g\}$ are each terminal. It follows that there is a normal form for elements of equivalence classes under the congruence ``$\sim$", which is simply to take the representative in $\Hom_B(00,11)$. That is, we have $\Hom_C(0,1) = \Hom_B(00,11)$ as desired.
\end{proof}

\begin{lem}\label{lem:sec}
(Assuming Hypothesis \ref{eqn:ind}.) the map $\xi_n : (\Sigma \square^n) \otimes^\lax \square^1 \to \Sigma \square^{n+1}$ of \cref{obs:1-loc} has a bipointed section $\chi_n : \Sigma \square^{n+1} \to (\Sigma \square^n) \otimes^\lax \square^1$.
\end{lem}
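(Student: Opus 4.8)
The plan is to construct $\chi_n$ via the universal property of the suspension $\Sigma \square^{n+1}$ (\cref{lem:susp-univ}): a functor $\Sigma \square^{n+1} \to (\Sigma \square^n) \otimes^\lax \square^1$ is the same data as a choice of two objects of the target together with a functor $\square^{n+1} \to \Hom_{(\Sigma \square^n) \otimes^\lax \square^1}(-,-)$ into the corresponding hom-category. For the bipointing to be respected we are forced to send $0 \mapsto (0,0) = 00$ and $1 \mapsto (1,1) = 11$, so it remains to supply a functor $\square^{n+1} \to \Hom_{(\Sigma \square^n)\otimes^\lax \square^1}(00,11)$. By \cref{eg:po-anal}, $\xi_n$ restricts to an isomorphism $\Hom_{(\Sigma \square^n)\otimes^\lax \square^1}(00,11) \xrightarrow{\cong} \Hom_{\Sigma(\square^{n+1})}(0,1)$, and by the definition of suspension the latter hom-category is canonically $\square^{n+1}$ itself. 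So I would \emph{define} the functor $\square^{n+1} \to \Hom_{(\Sigma \square^n)\otimes^\lax \square^1}(00,11)$ to be the inverse of this isomorphism (precomposed with the canonical identification $\Hom_{\Sigma(\square^{n+1})}(0,1) = \square^{n+1}$). This produces a well-defined bipointed functor $\chi_n : \Sigma \square^{n+1} \to (\Sigma \square^n) \otimes^\lax \square^1$.

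It then remains to check that $\xi_n \chi_n = \id_{\Sigma \square^{n+1}}$. Both sides are functors out of $\Sigma \square^{n+1}$, so by \cref{lem:susp-univ} it suffices to check they agree on objects and on the nontrivial hom-category $\Hom_{\Sigma(\square^{n+1})}(0,1)$. On objects, $\chi_n$ sends $0,1$ to $00,11$ and $\xi_n$ sends these back to $0,1$ (as recorded in the bipointed factorization of \cref{obs:1-loc}), so the composite is the identity on objects. On the hom-category, $\chi_n$ acts by the chosen isomorphism's inverse and $\xi_n$ acts by the isomorphism of \cref{eg:po-anal} itself, so the composite is the identity there as well. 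Hence $\xi_n \chi_n = \id$, and $\chi_n$ is the desired bipointed section. Bipointedness of $\chi_n$ is immediate from the construction, since the two chosen objects $00, 11$ are precisely the bipointing of $(\Sigma \square^n)\otimes^\lax \square^1$ coming from the diagonal bipointing of \cref{rmk:tensor-bipoint} applied to the natural bipointings $0,1$ of $\Sigma \square^n$ and $\square^1$.

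The one subtlety I would be careful about — and which I expect to be the main technical point rather than a genuine obstacle — is making sure the two identifications being composed are literally inverse to each other as functors, not merely abstractly isomorphic: one must track that the canonical map $\Hom_{\Sigma(\square^{n+1})}(0,1) = \square^{n+1}$ built into \cref{def:susp}, the pushout-induced map $\xi_n$ of \cref{obs:1-loc}, and the isomorphism extracted in \cref{eg:po-anal} are all compatible on the nose. Everything here takes place in the strict $1$-category $\sCat_{n+2}$ where these are honest equalities of functors, so this is a matter of unwinding definitions rather than a delicate argument; the real content has already been done in \cref{eg:po-anal}, whose proof used Hypothesis \ref{eqn:ind} (equivalently \eqref{eqn:contr}) to force the normal form on hom-categories.
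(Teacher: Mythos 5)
Your proposal is correct and follows exactly the paper's own argument: both define $\chi_n$ via the universal property of suspension (\cref{lem:susp-univ}), taking the functor on the long hom-category to be the inverse of the isomorphism from \cref{eg:po-anal}, so that $\xi_n\chi_n = \id$ by construction. Your additional remarks on verifying the composite on objects and on bipointedness are just explicit spellings-out of what the paper leaves implicit.
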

\begin{proof}
By \cref{lem:susp-univ}, to define $\chi_n$ it suffices to choose a map $\square^{n+1} \to \Hom_{(\Sigma \square^n) \otimes^\lax \square^1}(00,11)$. We take this to be the inverse of the isomorphism from \cref{eg:po-anal}. So by \cref{eg:po-anal} it is indeed a section to $\xi_n$ (in fact, the unique section).
\end{proof}

\begin{proof}[Proof of \cref{thm:susp}]
We wish to construct bipointed sections $\phi_n : \Sigma (\square^n) \to \square^1 \otimes^\lax \square^n$ of the maps $\psi_n$ of \cref{obs:2}, by induction on $n$. When $n=0$, we take $\phi_0$ to be the identity. Assume for induction that we have defined $\phi_n$; let us define $\phi_{n+1}$. Then $\phi_n \otimes^\lax \id_{\square^1}$ is a bipointed section of $\psi_n \otimes^\lax \id_{\square^1}$. By \cref{obs:1-loc} we have $\psi_{n+1} = \xi_n (\psi_n \otimes^\lax \id_{\square^1})$, so it will suffice to construct a bipointed section $\chi_n$ of $\xi_n$. This is \cref{lem:sec} (which was proven under Hypothesis \ref{eqn:ind}, which is part of our inductive hypothesis now).
\end{proof}

\appendix

\section{A small computation with cubes}\label{sec:app}
In \cref{eg:po-anal}, we have used the following:
\begin{lem}\label{lem:stupid-lem}
Let $X \in \widetilde{\square}$ be a retract of a cube, and let $x \in X$. Then the following hom-categories are all terminal:
\[\Hom_{X \otimes^\lax \square^1}((x,0),(x,0)) = \Hom_{X \otimes^\lax \square^1}((x,0),(x,1)) = \Hom_{X \otimes^\lax \square^1}((x,1),(x,1)) = \square^0\]
\end{lem}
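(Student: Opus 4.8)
The plan is to reduce to the case $X=\square^m$ a cube, and then to a small combinatorial fact about faces of cubes. First, since $X$ is a retract of some cube $\square^m$ and $-\otimes^\lax\square^1$ is a functor, $X\otimes^\lax\square^1$ is a retract of $\square^m\otimes^\lax\square^1=\square^{m+1}$, with section and retraction of the form $s\otimes^\lax\id_{\square^1}$ and $r\otimes^\lax\id_{\square^1}$; in particular these preserve the $\square^1$-coordinate on objects, so the object $(x,\epsilon)$ is sent to one of the form $(v,\epsilon)$ with $v$ a vertex of $\square^m$. Applying $\Hom$, each of the three hom-$\omega$-categories in the statement is exhibited as a retract of the corresponding hom-$\omega$-category of $\square^{m+1}$ between two vertices of the form $(v,0)$ and $(v,1)$, which either coincide or differ only in the last coordinate. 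Since the only retract of the terminal $\omega$-category $\square^0$ is $\square^0$ (a section of the unique map $Y\to\square^0$ is a constant functor, so if it is the identity then $Y$ has one object and only identity cells), it suffices to prove that $\Hom_{\square^N}(a,b)=\square^0$ whenever $a\le b$ are vertices of a cube $\square^N$ differing in at most one coordinate.

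For this I would use the standard combinatorial description of $\square^N$ as a strict $\omega$-category generated under composition by the faces of the geometric $N$-cube (cf. \cite{ara-malt}, and Steiner's theory of loop-free directed complexes): the generating $k$-face obtained by freezing $N-k$ coordinates has, after iterating source and target down to objects, source the vertex of that face with all free coordinates $0$ and target the vertex with all free coordinates $1$, so its $0$-dimensional source and target differ in exactly $k$ coordinates. Every cell of $\square^N$ is an iterated composite of such faces, and a short induction over the composition operations shows that a cell with $0$-dimensional source $a$ and target $b$ has $a\le b$ in $\{0,1\}^N$ and is built only from faces lying in the lattice interval $[a,b]$. When $a=b$ this interval contains no face of positive dimension, so the only cell with these extremes is $\id_a$; when $b$ covers $a$ it contains exactly one positive-dimensional face, the edge $e\colon a\to b$, and then the only cells with these extremes are $e$ and its iterated identities, because the target $b\neq a$ of $e$ leaves no object at which to compose $e$ with itself and $[a,b]$ contains no $2$-dimensional face to support a non-identity $2$-cell. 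In either case $\square^N$ has exactly one cell in each dimension with the prescribed source and target, i.e. $\Hom_{\square^N}(a,b)=\square^0$.

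The main obstacle is bookkeeping rather than depth: making precise that the faces generate $\square^N$ and that iterated composites cannot escape the interval $[a,b]$, for whichever concrete model of the cube one adopts. If one prefers not to invoke the global face presentation, an alternative is an induction on $N$ writing $\square^N=\square^{N-1}\otimes^\lax\square^1$ and analysing only the low-dimensional cells of a Gray tensor product $A\otimes^\lax\square^1$: the two inclusions $A\rightrightarrows A\otimes^\lax\square^1$ are fully faithful (any cell lying over a single point of $\square^1$ is a cell of $A$), and, using that a cube admits no nontrivial ``there and back'' path, $\Hom_{\square^{N-1}\otimes^\lax\square^1}((a,0),(a,1))$ reduces to $\Hom_{\square^{N-1}}(a,a)$; applied with $A=\square^{N-1}$ this brings all three homs in the statement back to hom-$\omega$-categories of lower-dimensional cubes between coinciding vertices, and the induction closes at $\square^0$. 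Either way, the content is the elementary observation that a cube carries no non-identity cell between a vertex and itself, nor between two adjacent vertices.
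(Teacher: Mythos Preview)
Your reduction to the case $X=\square^m$ via retracts is exactly the paper's first move. After that, the paper and your main argument diverge in packaging, though not really in substance. The paper isolates a cleaner intermediate statement: the inclusion $\{x\}\otimes^\lax\id_{\square^n}\colon\square^n\to\square^m\otimes^\lax\square^n$ is fully faithful (and dually). This is proved by reducing to $m=1$ and then reading off from the Steiner basis of $\square^1\otimes^\lax\square^n$ that any morphism not generated by $\{\partial^0\otimes y\}$ has $0$-target with first coordinate $1$, hence lies outside the image. Applying this full faithfulness, the hom-categories in the lemma reduce to those of $\square^1$, which are visibly terminal.

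Your main argument instead proves directly that a cell of $\square^N$ with $0$-source $a$ and $0$-target $b$ is a composite of faces contained in the interval $[a,b]$, by induction on the compositional structure. This is correct and is really the same content as the paper's full faithfulness (it says precisely that the face inclusion of $[a,b]$ is full), but it is organized around the cell rather than around the inclusion functor. The paper's formulation has the advantage of being reusable and of making the appeal to Steiner theory a single clean step; your formulation is a bit more hands-on and requires you to justify that cubes are freely generated by their faces under composition, which you flag but do not spell out. Your ``alternative'' inductive approach via full faithfulness of the two inclusions $A\rightrightarrows A\otimes^\lax\square^1$ is essentially the paper's route; the one loose end there is your claim that $\Hom_{\square^{N-1}\otimes^\lax\square^1}((a,0),(a,1))$ ``reduces to'' $\Hom_{\square^{N-1}}(a,a)$, which is true but needs more than the projection to $\square^{N-1}$---it is again the Steiner-basis analysis that pins this down.
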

This is surely well-known. We will deduce it from the following stronger statement:

\begin{lem}\label{lem:next-stupid}
Let $X \in \widetilde{\square}$ be a retract of a cube, and let $x \in X$. Consider the inclusion 
\[
\{x\} \otimes^\lax \id_{\square^n} \otimes^\lax \{y\}: \square^n \to \square^m \otimes^\lax \square^n \otimes^\lax \square^p
\]
This inclusion functor is fully faithful for any $x \in \square^n, y \in \square^p$.
\end{lem}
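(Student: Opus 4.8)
The plan is to factor the given inclusion into a chain of elementary ``face'' inclusions of cubes and then to show that a single such face inclusion is fully faithful, using the polygraphic description of the Gray tensor product.

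\emph{Step 1 (reduction to a single face).} Writing $\square^m=(\square^1)^{\otimes^\lax m}$ and $\square^p=(\square^1)^{\otimes^\lax p}$ and using associativity of $\otimes^\lax$, the functor $\{x\}\otimes^\lax\id_{\square^n}\otimes^\lax\{y\}$ (for objects $x$ of $\square^m$ and $y$ of $\square^p$) factors as a composite of $m+p$ arrows, introducing the factors of $\square^1$ one at a time (on the right for the $\square^p$-part, on the left for the $\square^m$-part, each pinned at the relevant coordinate of $x$ or $y$), so that every arrow in the composite has the form
\[
\id_{\square^k}\otimes^\lax\{\epsilon\}\colon\square^k\to\square^k\otimes^\lax\square^1=\square^{k+1}
\qquad\text{or}\qquad
\{\epsilon\}\otimes^\lax\id_{\square^k}\colon\square^k\to\square^1\otimes^\lax\square^k=\square^{k+1}
\]
with $\epsilon\in\{0,1\}$. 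Since a composite of fully faithful functors is fully faithful, it suffices to show each such face inclusion is fully faithful; the four variants are treated identically, so I will work with $\iota:=\id_{\square^k}\otimes^\lax\{0\}\colon\square^k\to\square^{k+1}$.

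\emph{Step 2 (the face inclusion is fully faithful).} Let $\pi\colon\square^{k+1}=\square^k\otimes^\lax\square^1\to\square^k$ be the projection obtained by tensoring $!\colon\square^1\to\square^0$ with $\id_{\square^k}$, so that $\pi\iota=\id_{\square^k}$. Then $\iota$ is injective on cells and restricts, for each pair of vertices $\vec a,\vec b$, to an injection $\Hom_{\square^k}(\vec a,\vec b)\hookrightarrow\Hom_{\square^{k+1}}((\vec a,0),(\vec b,0))$ with one-sided inverse induced by $\pi$; moreover if $\iota(v)$ lies in this sub-hom then comparing $0$-sources and $0$-targets forces $v\in\Hom_{\square^k}(\vec a,\vec b)$, so it remains only to see that $\iota$ is \emph{surjective} on cells in each such hom. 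Let $u$ be a cell of $\square^{k+1}$ from $(\vec a,0)$ to $(\vec b,0)$, and consider the other projection $q\colon\square^{k+1}\to\square^1$ obtained by tensoring $!\colon\square^k\to\square^0$ with $\id_{\square^1}$. By the standard description of the Gray tensor product of polygraphs (see \cite{ara-malt}), $\square^{k+1}$ is the polygraph whose generating cells are the products $\sigma\otimes^\lax\delta$ of a face $\sigma$ of $\square^k$ with a face $\delta$ of $\square^1$, and $q$ sends such a generator to a degeneracy of $\delta$. Now $q(u)$ is a cell of $\square^1$ whose $0$-source and $0$-target are both $0$, hence a degeneracy of the object $0$; since $\square^1$ is a poset, a composite of its cells is a degeneracy of $0$ only if every factor is, so in any expression of $u$ as a composite of generating cells only generators of the form $\sigma\otimes^\lax[0]$ can occur. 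Each of these is fixed by the endofunctor $\iota\pi=\id_{\square^k}\otimes^\lax c_0$, where $c_0\colon\square^1\to\square^1$ is the constant functor at $0$ (so $c_0$ fixes $[0]$); hence $\iota\pi(u)=u$, i.e.\ $u=\iota(\pi(u))$ lies in the image of $\iota$. Therefore $\iota$ induces a bijection on cells in each hom, hence an isomorphism of hom-$\omega$-categories, i.e.\ $\iota$ is fully faithful. Combined with Step 1, this proves the lemma.

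\emph{Step 3 (retracts, and \cref{lem:stupid-lem}).} If $X$ is a retract of a cube $\square^\ell$, then tensoring the retraction data with $\id_{\square^m}$ on the left and $\id_{\square^p}$ on the right exhibits $\{x\}\otimes^\lax\id_X\otimes^\lax\{y\}$ as a retract, in the arrow category of $\sCat_\omega$, of the fully faithful inclusion $\{x\}\otimes^\lax\id_{\square^\ell}\otimes^\lax\{y\}$; since each induced map on hom-$\omega$-categories is then a retract of an isomorphism, it is itself an isomorphism, so this inclusion too is fully faithful. Specialising to $n\in\{0,1\}$, where the relevant hom-$\omega$-categories of the ambient cube are $\Hom_{\square^0}(\ast,\ast)=\square^0$ and $\Hom_{\square^1}(0,1)=\square^0$, and using that a retract of the terminal $\omega$-category is terminal, one recovers \cref{lem:stupid-lem}. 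I expect Step 2 to be the only genuine obstacle: everything there reduces to the polygraphic bookkeeping — that the generators of $\square^{k+1}$ are the products of generators of $\square^k$ and $\square^1$, together with the behaviour of the two projections on them — after which the computation takes place entirely inside the trivial $\omega$-category $\square^1$.
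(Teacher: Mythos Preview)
Your proof is correct and follows the same overall architecture as the paper's: factor the inclusion into single face inclusions $\square^k \hookrightarrow \square^{k+1}$ (this is exactly the reduction to $m=1$ in the paper's proof of \cref{lem:next-next-stupid}), then argue each face inclusion is fully faithful using a structural description of the Gray tensor product.

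Where you diverge is in the argument for the single face. The paper invokes the Steiner complex structure: $\square^1 \otimes^\lax \square^k$ has the product basis, the image of the face is generated by the sub-basis $\{\partial^0 \otimes y\}$, and any cell involving another basis element has $0$-target with first coordinate $1$. Your argument instead uses the polygraph description together with the second projection $q\colon \square^{k+1}\to\square^1$: since $q(u)$ lives in $\Hom_{\square^1}(0,0)$ and $\square^1$ has no morphisms $1\to 0$, an easy induction on the pasting expression forces every generator appearing in $u$ to project to a degeneracy of $0$, hence to have the form $\sigma\otimes[0]$, whence $u$ is fixed by $\iota\pi$. This is a genuinely nicer packaging: it trades the bookkeeping of Steiner bases for a one-line reduction to the trivial $\omega$-category $\square^1$, and the argument that no factor can have $0$-target $1$ is more transparent in your version than in the paper's somewhat terse sentence ``any other morphism must therefore have a codomain whose first coordinate is $1$.''

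Your Step~3 goes beyond the statement of \cref{lem:next-stupid} (the hypothesis ``$X\in\widetilde\square$'' in the lemma is vestigial and plays no role in the paper's proof either) and recovers \cref{lem:stupid-lem}; this is correct but not needed here.
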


\begin{proof}[Proof of \cref{lem:stupid-lem}]
It suffices to treat the case where $X = \square^m$ is a cube. In other words, it suffices to show that the hom-categories $\Hom_{\square^{m} \otimes^\lax \square^{1}}((x,0),(x,0)) = \Hom_{\square^{m} \otimes^\lax \square^{1}}((x,0),(x,1)) = \Hom_{\square^{m} \otimes^\lax \square^{1}}((x,1),(x,1)) = \square^0$ are all terminal. By \cref{lem:next-stupid}, it will suffice to treat the case $m = 0$. Thus we are reduced to showing that $\Hom_{\square^1}(0,0) = \Hom_{\square^1}(0,1) = \Hom_{\square^1}(1,1) = \square^0$, which is clear because $\square^1$ is the arrow category.
\end{proof}

\cref{lem:next-stupid} follows from the following statement and its dual:
\begin{lem}\label{lem:next-next-stupid}
Let $X \in \widetilde{\square}$ be a retract of a cube, and let $x \in X$. The inclusion 
\[
\{x\} \otimes^\lax \id_{\square^n} : \square^n \to \square^m \otimes^\lax \square^n
\]
is fully faithful for any $x \in \square^m$.
\end{lem}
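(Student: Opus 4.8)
The plan is to prove \cref{lem:next-next-stupid} by induction on $m$, peeling off one tensor factor at a time until only the case $m=1$ remains, and to handle that case by realizing the face inclusion as a pullback.

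\emph{Reduction to $m=1$.} The base case $m=0$ is immediate, since $\square^0\otimes^\lax\square^n=\square^n$ and the map in question is the identity. For the inductive step, use associativity of $\otimes^\lax$ to write $\square^m\otimes^\lax\square^n=\square^1\otimes^\lax(\square^{m-1}\otimes^\lax\square^n)$, and write the chosen object as $x=(x_1,x')$ with $x_1\in\square^1$ and $x'\in\square^{m-1}$. Then $\{x\}\otimes^\lax\id_{\square^n}$ factors as
\[
\square^n\ \xrightarrow{\ \{x'\}\otimes^\lax\id_{\square^n}\ }\ \square^{m-1}\otimes^\lax\square^n\ \xrightarrow{\ \{x_1\}\otimes^\lax\id\ }\ \square^1\otimes^\lax(\square^{m-1}\otimes^\lax\square^n).
\]
The first map is fully faithful by the inductive hypothesis. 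The second is an instance of the case $m=1$ of the lemma, applied with the cube $\square^{m-1}\otimes^\lax\square^n=\square^{m-1+n}$ in the right-hand factor in place of $\square^n$; this is why we phrase (and must prove) the $m=1$ case with an arbitrary cube in the right-hand slot. As fully faithful functors compose, we are reduced to showing: for $\epsilon\in\{0,1\}$ and any $N$, the face inclusion $\iota_\epsilon:=\{\epsilon\}\otimes^\lax\id_{\square^N}\colon\square^N\to\square^1\otimes^\lax\square^N$ is fully faithful.

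\emph{The case $m=1$ as a pullback.} Let $p\colon\square^1\otimes^\lax\square^N\to\square^1$ be obtained by tensoring the unique map $\square^N\to\square^0$ on the right with $\id_{\square^1}$ (using $\square^1\otimes^\lax\square^0=\square^1$). Then $p\iota_\epsilon$ is the constant functor at $\epsilon$, so there is a canonical comparison functor $\square^N\to(\square^1\otimes^\lax\square^N)\times_{\square^1}\square^0$, and the key claim is that it is an isomorphism, i.e.\ that the square
\begin{equation*}
\begin{tikzcd}
\square^N \ar[r,"\iota_\epsilon"] \ar[d] & \square^1\otimes^\lax\square^N \ar[d,"p"] \\
\square^0 \ar[r,"\epsilon"] & \square^1
\end{tikzcd}
\end{equation*}
is a pullback in $\sCat_\omega$. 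Granting this, full faithfulness is formal. By \cref{lem:susp-univ}, the suspension $\Sigma$ is left adjoint to the functor sending an $\omega$-category equipped with an ordered pair of objects $(D;d_0,d_1)$ to $\Hom_D(d_0,d_1)$; this right adjoint therefore preserves pullbacks, and pullbacks in the category of such bipointed $\omega$-categories are computed underlyingly. Applying it to the square above, with the bipointings $a,b$ on $\square^N$ and their images, and using that $\Hom_{\square^1}(\epsilon,\epsilon)=\square^0$ is terminal, we get that the canonical map
\[
\Hom_{\square^N}(a,b)\ \longrightarrow\ \Hom_{\square^1\otimes^\lax\square^N}(\iota_\epsilon a,\iota_\epsilon b)\times_{\Hom_{\square^1}(\epsilon,\epsilon)}\Hom_{\square^0}(\ast,\ast)=\Hom_{\square^1\otimes^\lax\square^N}(\iota_\epsilon a,\iota_\epsilon b)
\]
is an isomorphism. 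But this map is precisely the action of $\iota_\epsilon$ on hom-$\omega$-categories, so $\iota_\epsilon$ is fully faithful.

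\emph{The main obstacle, and the dual.} Everything above is soft; the real content is the pullback claim, for which no formal argument is available (compatibility of $\square^1\otimes^\lax-$ with colimits is useless for a limit statement). I would verify it using Steiner's chain-complex model of the Gray tensor product: cubes are strong Steiner complexes, $\square^1\otimes^\lax\square^N$ corresponds to the tensor product of augmented directed chain complexes $\ints\square^1\otimes\ints\square^N$, and $p$ to the projection onto $\ints\square^1\otimes\ints\square^0=\ints\square^1$ that annihilates the positive-degree part of $\ints\square^N$; one then checks that the sub-$\omega$-category of $\square^1\otimes^\lax\square^N$ consisting of cells lying over the vertex $[\epsilon]$ of $\square^1$ (equivalently, mapping to a degenerate cell of $\square^1$ with both $0$-source and $0$-target on the $\epsilon$-face) is exactly the image of $\iota_\epsilon$, which is the pullback. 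Alternatively one can argue entirely by hand from the standard description of the cells of $\square^k$ as words in $\{0,1,\ast\}^k$ (a $k$-cell being a word with $k$ occurrences of $\ast$) together with the Gray sign rule for sources and targets. Finally, the dual statement needed in \cref{lem:next-stupid} — that $\id_{\square^n}\otimes^\lax\{y\}$ is fully faithful — follows by the same argument with the collapse applied on the left, or from the cell-reversal symmetry of $\otimes^\lax$ together with the fact that reversals carry cubes to cubes.
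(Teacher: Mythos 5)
Your reduction to the case $m=1$ (with an arbitrary cube in the second tensor slot) is the same peel-off-one-factor argument the paper uses, and it is correct. Where you genuinely diverge is in the $m=1$ case itself. The paper argues directly on hom-categories: faithfulness because the face inclusion is split monic, and fullness because, in the Steiner basis $\{x\otimes y \mid x\in\{\partial^0,\partial^1,\iota\},\ y\in B_N\}$ of $\square^1\otimes^\lax\square^N$, any cell not supported on $\{\partial^\epsilon\otimes y\}$ has a $0$-source or $0$-target on the wrong face and so cannot lie in a hom-category between objects of the $\epsilon$-face. You instead assert that the face inclusion is the fiber of the projection $p\colon\square^1\otimes^\lax\square^N\to\square^1$ over $\epsilon$, and then derive full faithfulness formally by applying the limit-preserving right adjoint of $\Sigma$ supplied by \cref{lem:susp-univ} and using that $\Hom_{\square^1}(\epsilon,\epsilon)=\square^0$. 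That formal derivation is sound (pullbacks of bipointed objects are indeed created by the forgetful functor), and it is a clean way to convert ``fiber'' into ``fully faithful.''

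The caveat is that, as you acknowledge, all of the content is now concentrated in the pullback claim, and your proposal only gestures at its verification (``one then checks that the cells lying over $[\epsilon]$ are exactly the image of $\iota_\epsilon$''). That check is not formal: one must rule out cells whose Steiner support meets $\iota\otimes B_N$ (these have $0$-source on the $\partial^0$ face and $0$-target on the $\partial^1$ face, hence do not lie over an identity of $\epsilon$) as well as cells supported on the opposite face, and this is precisely the support analysis the paper performs to prove fullness directly. So your route is a legitimate and arguably more conceptual repackaging, but as written it defers rather than discharges the essential computation; once you carry it out, the two proofs share the same mathematical core. Your treatment of the dual statement needed in \cref{lem:next-stupid} (by symmetry of the argument, or by a duality of $\otimes^\lax$ exchanging the factors and preserving cubes) is at the same level of detail as the paper's, which likewise leaves the dual to the reader.
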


\begin{proof}[Proof of \cref{lem:next-stupid}]
We have a factorization $\{x\} \otimes^\lax \id_{\square^n} \otimes^\lax \{y\} = (\{x\} \otimes^\lax \id_{\square^{n+p}})(\id_{\square^n} \otimes^\lax \{y\})$. By \cref{lem:next-next-stupid}, and its dual, both factors are fully faithful.
\end{proof}

\begin{proof}[Proof of \cref{lem:next-next-stupid}]
By factoring $\square^n \to \square^m \otimes^\lax \square^n$ as $\square^n \to \square^1 \otimes^\lax \square^n \to \cdots \to \square^m \otimes^\lax \square^n$, we reduce to the case $m = 1$.

The map $\partial^0 \otimes^\lax \id_{\square^n} : \square^n \to \square^1 \otimes^\lax \square^n$ is split monic and hence injective on hom-categories. Moreover, $\square^1$ and $\square^n$ are both Steiner complexes (\cite{steiner}) with bases $B_1,B_n$. Here $B_1 = \{\partial^0,\partial^1,\iota\}$, where $\partial^0,\partial^1$ have degree $0$ and $\iota$ has degree $1$. It follows (cf \cite[2.4]{ara-lucas}) that $\square^1 \otimes^\lax \square^n$ is a Steiner complex with basis $\{x \otimes y \mid x \in B_1,\, y \in B_n\}$. All morphisms in $\square^1 \otimes \square^n$ are built from this basis, and the image of $\square^n$ is generated by $\{\partial^0 \otimes y \mid y \in B_n\}$. Any other morphism must therefore have a codomain whose first coordinate is $1$ rather than $0$, and so does not have codomain in the image of $\partial^0 \otimes^\lax \id_{\square^n}$. Thus $\partial^0 \otimes^\lax \id_{\square^n}$ is also surjective on hom-categories, and so is fully faithful as desired. The argument for $\partial^1 \otimes^\lax \id_{\square^n}$ is similar.
\end{proof}


\end{document}